\newtheorem{theorem}{Theorem}[section]
\newtheorem{lemma}[theorem]{Lemma}
\newtheorem{proposition}[theorem]{Proposition}
\begin{document}

\sloppy

\title{Actions of groups of homeomorphisms on one-manifolds}
\author{Emmanuel Militon \thanks{supported by the Fondation Mathématique Jacques Hadamard. Centre de mathématiques Laurent Schwartz. \'Ecole Polytechnique. 91128 Palaiseau cedex. e-mail: emmanuel.militon@math.polytechnique.fr} }
\date{\today}
\maketitle

\setlength{\parskip}{10pt}

\selectlanguage{english}
\begin{abstract}
In this article, we describe all the group morphisms from the group of compactly-supported homeomorphisms isotopic to the identity of a manifold to the group of homeomorphisms of the real line or of the circle.\\
MSC: 37C85.
\end{abstract}
\section{Introduction}

Fix a connected manifold $M$ (without boundary). For an integer $r \geq 0$, we denote by $\mathrm{Diff}^{r}(M)$ the group of $C^{r}$-diffeomorphisms of $M$. When $r=0$, this group will also be denoted by $\mathrm{Homeo}(M)$. For a homeomorphism $f$ of $M$, the \emph{support} of $f$ is the closure of the set:
$$ \left\{ x \in M, \ f(x) \neq x \right\}.$$
We denote by $\mathrm{Diff}_{0}^{r}(M)$ ($\mathrm{Homeo}_{0}(M)$ if $r=0$) the identity component of the group of compactly supported $C^{r}$-diffeomorphisms of $M$ (for the strong topology). If $r \neq \mathrm{dim}(M)+1$, these groups are simple by a well-known and difficult theorem (see \cite{Ban}, \cite{Bou}, \cite{Fis}, \cite{Mat1}, \cite{Mat2}).

In \cite{Ghy2}, \'Etienne Ghys asked whether the following statement was true: if $M$ and $N$ are two closed manifolds and if there exists a non-trivial morphism $\mathrm{Diff}_{0}^{\infty}(M) \rightarrow \mathrm{Diff}_{0}^{\infty}(N)$, then $\mathrm{dim}(M) \geq \mathrm{dim}(N)$. In \cite{Man}, Kathryn Mann proved the following theorem. Take a connected manifold $M$ of dimension greater than $1$ and a one-dimensional connected manifold $N$. Then any morphism $\mathrm{Diff}_{0}^{\infty}(M) \rightarrow \mathrm{Diff}_{0}^{\infty}(N)$ is trivial: she answers Ghys's question in the case where the manifold $N$ is one-dimensional. Mann also describes all the group morphisms $\mathrm{Diff}_{0}^{r}(M) \rightarrow \mathrm{Diff}_{0}^{r}(N)$ for $r \geq 3$ when $M$ as well as $N$ are one-dimensional. The techniques involved in the proofs of these theorems are Kopell's lemma (see \cite{Nav} Theorem 4.1.1) and Szekeres's theorem (see \cite{Nav} Theorem 4.1.11). These theorems are valid only for a regularity at least $C^{2}$. In this article, we prove similar results in the case of a $C^{0}$ regularity. The techniques used are different.

\begin{theorem} \label{manoncirc}
Let $M$ be a connected manifold of dimension greater than $2$ and let $N$ be a connected one-manifold. Then any group morphism $\mathrm{Homeo}_{0}(M) \rightarrow \mathrm{Homeo}(N)$ is trivial.
\end{theorem}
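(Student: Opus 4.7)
Write $G=\mathrm{Homeo}_{0}(M)$ and let $\phi\colon G\to\mathrm{Homeo}(N)$ be a group morphism. By the simplicity of $G$ (cited in the introduction), $\phi$ is either trivial or injective, so it suffices to produce one non-trivial element of $\ker\phi$ in order to conclude. Moreover, an infinite simple group has no proper subgroup of finite index---the intersection of the conjugates of such a subgroup would be a proper normal subgroup of finite index---so the index-at-most-$2$ subgroup $\phi^{-1}(\mathrm{Homeo}_{+}(N))$ must equal all of $G$, and I may assume $\phi(G)\subset\mathrm{Homeo}_{+}(N)$.

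The key input is a pair of commuting non-trivial torsion elements in $G$. Because $\dim M\ge 2$, I can pick two disjoint embedded closed balls $B_{1},B_{2}\subset M$, and for each odd prime $p$ construct $g_{i}\in G$ of order exactly $p$ with $\mathrm{supp}(g_{i})\subset B_{i}$ by the explicit formula $g_{i}(r,\theta)=(r,\theta+(2\pi/p)\alpha(r))$ in local polar coordinates centred in $B_{i}$, where $\alpha$ is a cut-off function equal to $1$ near $0$ and to $0$ near $\partial B_{i}$. A direct computation gives $g_{i}^{p}=\mathrm{id}$, and the straight-line isotopy obtained by replacing $2\pi/p$ by $t\cdot 2\pi/p$, $t\in[0,1]$, places $g_{i}$ in the identity component. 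Disjointness of the supports gives $[g_{1},g_{2}]=1$ in $G$.

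If $N=\mathbb{R}$, then $\mathrm{Homeo}_{+}(\mathbb{R})$ is torsion-free (if an increasing bijection $f$ satisfied $f(x_{0})>x_{0}$ then monotonicity would force $f^{n}(x_{0})>x_{0}$ for every $n\ge 1$, and symmetrically on the other side), so $\phi(g_{1})=\mathrm{id}$ and $g_{1}\in\ker\phi\setminus\{\mathrm{id}\}$. If $N=S^{1}$, I use the Poincar\'e rotation number $\rho\colon\mathrm{Homeo}_{+}(S^{1})\to\mathbb{R}/\mathbb{Z}$; though only a quasi-morphism on the full group, its lift to $\widetilde{\mathrm{Homeo}}_{+}(S^{1})$ is homogeneous and hence a genuine homomorphism in restriction to any abelian subgroup. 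Applied to $A=\langle\phi(g_{1}),\phi(g_{2})\rangle$, this produces a homomorphism $\rho\colon A\to\frac{1}{p}\mathbb{Z}/\mathbb{Z}$, and it is injective: an orientation-preserving finite-order homeomorphism of $S^{1}$ with a fixed point restricts on each complementary arc to a finite-order orientation-preserving self-homeomorphism of an interval, which is the identity by the case $N=\mathbb{R}$. So $A$ is cyclic of order dividing $p$, there is a non-trivial relation $\phi(g_{1})^{a}\phi(g_{2})^{b}=\mathrm{id}$, and the disjoint-support element $g_{1}^{a}g_{2}^{b}\in G$ is a non-trivial element of $\ker\phi$.

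The main obstacle I anticipate is the construction of the torsion elements $g_{i}$: producing elements of \emph{exactly} order $p$ (not merely of some finite order dividing a power of $p$) while remaining inside the compactly supported identity component. The explicit formula above achieves this, but verifying the three properties---order, support, and membership in the identity component---is the most delicate step. Once it is in place, the remainder is a uniform combination of the simplicity of $G$ with torsion-freeness of $\mathrm{Homeo}_{+}(\mathbb{R})$ and injectivity of the rotation number on abelian torsion subgroups of $\mathrm{Homeo}_{+}(S^{1})$.
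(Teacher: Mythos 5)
Your reduction to finding a single non-trivial element of $\ker\phi$ is fine, as is the passage to $\phi(G)\subset\mathrm{Homeo}_{+}(N)$ and the endgame for both $N=\mathbb{R}$ (torsion-freeness) and $N=S^{1}$ (rotation number restricted to abelian subgroups). But the key input fails: there are no non-trivial finite-order elements of $\mathrm{Homeo}_{0}(M)$ supported in an embedded ball $B_{i}\subsetneq M$. Your explicit formula does not have order $p$: one computes $g_{i}^{p}(r,\theta)=(r,\theta+2\pi\alpha(r))$, which is a non-trivial twist at every radius $r$ with $\alpha(r)\notin\mathbb{Z}$, and such radii must exist because $\alpha$ interpolates continuously between $1$ and $0$; in fact $g_{i}$ has infinite order. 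This is not a defect of the particular formula but an obstruction of principle: by Newman's theorem a non-trivial finite-order homeomorphism of a connected manifold cannot fix a non-empty open set pointwise, whereas any homeomorphism supported in $B_{i}$ fixes the open set $M\setminus B_{i}$. So the commuting torsion pair $(g_{1},g_{2})$ you need does not exist, and the argument collapses at its first substantive step.

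The paper's route is entirely different and avoids torsion: it restricts $\phi$ to a subgroup $\mathrm{Homeo}_{0}(U)\cong\mathrm{Homeo}_{0}(\mathbb{R}^{d})$ for a chart $U\subset M$ and proves that every morphism $\mathrm{Homeo}_{0}(\mathbb{R}^{d})\to\mathrm{Homeo}(N)$ is trivial. For $N=S^{1}$ this uses the vanishing of $H^{2}_{b}(\mathrm{Homeo}_{0}(\mathbb{R}^{d}),\mathbb{Z})$ to produce a global fixed point and thereby reduce to $N=\mathbb{R}$; for $N=\mathbb{R}$ it analyses the fixed-point sets of the images of the stabilizers $G^{d}_{p}$ (homeomorphisms fixing a neighbourhood of a point) and of the groups $H^{d}_{D}$ (fixing a neighbourhood of an embedded $(d-1)$-ball), showing via a fragmentation lemma that the former are non-empty and pairwise disjoint while the latter are singletons --- incompatible once $d\geq 2$. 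Any repair of your strategy would have to abandon locally supported torsion, and with it both the commutativity and the applicability to non-compact $M$.
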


The case where the manifold $M$ is one-dimensional is also well-understood.

\begin{theorem} \label{roncirc}
Let $N$ be a connected one-manifold. For any group morphism $\varphi: \mathrm{Homeo}_{0}(\mathbb{R}) \rightarrow \mathrm{Homeo}(N)$, there exists a closed set $K \subset N$ such that:
\begin{enumerate}
\item The set $K$ is pointwise fixed under any homeomorphism in $\varphi(\mathrm{Homeo}_{0}(\mathbb{R}))$.
\item For any connected component $I$ of $N-K$, there exists a homeomorphism $h_{I}: \mathbb{R} \rightarrow I$ such that:
$$ \forall f \in \mathrm{Homeo}_{0}(\mathbb{R}), \ \varphi(f)_{|I}=h_{I}fh_{I}^{-1}.$$
\end{enumerate}
\end{theorem}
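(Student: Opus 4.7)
Because $\mathrm{Homeo}_0(\mathbb{R})$ is simple (via the Mather--Fisher--Bounemoura type theorems cited in the introduction), any morphism $\varphi$ is either trivial, in which case $K := N$ satisfies the conclusion vacuously, or injective. I assume the latter and set
\[
K = \bigcap_{f \in \mathrm{Homeo}_0(\mathbb{R})} \{y \in N : \varphi(f)(y) = y\},
\]
which is closed. The plan is to construct an equivariant ``support'' map $p: N \setminus K \to \mathbb{R}$ whose restriction to each connected component $I$ of $N \setminus K$ is a homeomorphism onto $\mathbb{R}$; the inverse $h_I := (p|_I)^{-1}$ will then satisfy the required conjugation.

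To build $p$, I attach to each $y \in N$ a closed subset $S(y) \subset \mathbb{R}$ via fragmentation. Let $\mathcal{V}(y)$ denote the family of open sets $U \subset \mathbb{R}$ such that every $g \in \mathrm{Homeo}_0(\mathbb{R})$ with $\mathrm{supp}(g) \subset U$ satisfies $\varphi(g)(y) = y$. The standard fragmentation lemma (any compactly supported homeomorphism factors as a product of elements subordinate to any chosen open cover of its support) implies that $\mathcal{V}(y)$ is closed under arbitrary unions, so admits a maximal element $V(y)$; I set $S(y) := \mathbb{R} \setminus V(y)$. Then $S(y)$ is closed; $S(y) = \emptyset$ iff $y \in K$; and the identity $\mathrm{supp}(fgf^{-1}) = f(\mathrm{supp}(g))$ yields the equivariance $S(\varphi(f)(y)) = f(S(y))$ for all $f \in \mathrm{Homeo}_0(\mathbb{R})$.

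The main obstacle is proving that $S(y)$ is a single point whenever $y \in N \setminus K$. I would argue by contradiction: if $p < q$ both belong to $S(y)$, pick disjoint open intervals $U \ni p$ and $U' \ni q$, together with $f \in G_U := \{h : \mathrm{supp}(h) \subset U\}$ and $g \in G_{U'}$ such that $\varphi(f)(y) \neq y \neq \varphi(g)(y)$. The subgroups $G_U$ and $G_{U'}$ commute elementwise, and each is isomorphic to $\mathrm{Homeo}_0(\mathbb{R})$, so $\varphi|_{G_U}$ and $\varphi|_{G_{U'}}$ are non-trivial hence injective by simplicity. The desired contradiction should come from one-dimensionality of $N$: on the connected component $J$ of $N \setminus \mathrm{Fix}(\varphi(f))$ containing $y$, the restriction $\varphi(f)|_J$ is a fixed-point-free orientation-preserving homeomorphism, and H\"older's theorem forces any element of $G_{U'}$ whose $\varphi$-image preserves $J$ to act on $J$ as an iterate (or translation) relative to $\varphi(f)|_J$, so that a large abelian quotient is extracted from the simple non-abelian group $G_{U'}$. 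Making this rigorous---in particular, controlling which elements of $G_{U'}$ preserve $J$---is the step where genuinely $C^0$ techniques must replace the Kopell--Szekeres arguments used in the $C^r$ setting.

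Once $S(y) = \{p(y)\}$ is established, I define $p : N \setminus K \to \mathbb{R}$ by sending $y$ to the unique element of $S(y)$. Equivariance then reads $p \circ \varphi(f) = f \circ p$. Continuity of $p$ follows from the defining property of $V$: if $y_n \to y$, every compact subset of $V(y)$ lies eventually in $V(y_n)$, confining $S(y_n)$ near $p(y)$. Using the transitivity of $\mathrm{Homeo}_0(\mathbb{R})$ on $\mathbb{R}$, equivariance, and connectedness of each component $I$ of $N \setminus K$, the restriction $p|_I$ must be a surjective order-preserving homeomorphism of $I$ onto $\mathbb{R}$, and the conjugation formula $\varphi(f)|_I = h_I f h_I^{-1}$ becomes immediate with $h_I := (p|_I)^{-1}$.
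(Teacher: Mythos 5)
Your architecture is essentially the paper's own, written backwards: your map $p$ is the inverse of the paper's map $h:\mathbb{R}\to I$ sending $x$ to the unique fixed point of $\varphi(G^{1}_{x})$ (note that your fibre $p^{-1}(q)$ is exactly $\mathrm{Fix}(\varphi(G^{1}_{q}))\setminus K$, by fragmentation). The problem is that every hard step is left open, and the one step you do sketch rests on a false premise. You invoke H\"older's theorem to claim that elements of $G_{U'}$ commuting with the fixed-point-free homeomorphism $\varphi(f)|_{J}$ must act ``as iterates or translations relative to $\varphi(f)|_{J}$,'' yielding an abelian quotient of the simple group $G_{U'}$. But H\"older's theorem applies to groups acting \emph{freely}; the centralizer of a single fixed-point-free homeomorphism of an interval is not abelian --- the centralizer of $x\mapsto x+1$ in $\mathrm{Homeo}_{+}(\mathbb{R})$ is the huge non-abelian group $\mathrm{Homeo}_{\mathbb{Z}}(\mathbb{R})$ --- so no abelian quotient is extracted and no contradiction follows. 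This is precisely why the paper's Proposition 2.2 goes through commuting subgroups isomorphic to $\mathrm{Homeo}_{\mathbb{Z}}(\mathbb{R})$, classifies the actions of $\mathrm{Homeo}_{\mathbb{Z}}(\mathbb{R})$ on the line (Lemma 5.1), and uses perfectness of $\mathrm{Homeo}_{\mathbb{Z}}(\mathbb{R})$ together with Matsumoto's theorem to rule out the bad configurations. Similarly, the injectivity of $p|_{I}$, which you attribute to ``transitivity, equivariance, and connectedness,'' is exactly the statement that $\varphi(G^{1}_{q})$ has at most one fixed point in $I$; the paper's proof of this (Proposition 2.1) is a genuine argument producing uncountably many pairwise disjoint open intervals and contradicting second countability. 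Your continuity claim for $p$ is also unjustified as stated ($\varphi(g)$ fixing $y$ says nothing about nearby points $y_{n}$); the paper avoids this by proving monotonicity and then surjectivity, so that the inverse is automatically a homeomorphism.

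Separately, you never treat the case $N=\mathbb{S}^{1}$. The theorem implicitly asserts that a nontrivial action on the circle has a global fixed point (otherwise $K=\emptyset$ and the unique component of $N-K$ is compact, hence not homeomorphic to $\mathbb{R}$). The paper obtains this from the vanishing of $H^{2}_{b}(\mathrm{Homeo}_{0}(\mathbb{R}),\mathbb{Z})$, which kills the bounded Euler class (Proposition 2.3); nothing in your proposal produces this fixed point. In summary: the skeleton is right and coincides with the paper's, but the two analytic pillars (``at least one'' and ``at most one'' fixed point for the stabilizer subgroups) and the circle case are missing, and the substitute argument you propose for the first pillar does not work.
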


\noindent \textbf{Remark}: By a theorem by Matsumoto (see \cite{Mats} Theorem 5.3), every group morphism $\mathrm{Homeo}_{0}(\mathbb{S}^{1}) \rightarrow \mathrm{Homeo}_{0}(\mathbb{S}^{1})$ is a conjugation by a homeomorphism of the circle. Moreover, any group morphism $\mathrm{Homeo}_{0}(\mathbb{S}^{1}) \rightarrow \mathrm{Homeo}(\mathbb{R})$ is trivial. Indeed, as the group $\mathrm{Homeo}_{0}(\mathbb{S}^{1})$ is simple, such a group morphism is either one-to-one or trivial. However, the group $\mathrm{Homeo}_{0}(\mathbb{S}^{1})$ contains torsion elements whereas the group $\mathrm{Homeo}(\mathbb{R})$ does not: such a morphism cannot be one-to-one.

\section{Proofs of Theorems \ref{manoncirc} and \ref{roncirc}}

Fix an integer $d \geq 1$. For a point $p$ in $\mathbb{R}^{d}$, we denote by $G_{p}^{d}$  the group $\mathrm{Homeo}_{0}(\mathbb{R}^{d}- \left\{ p \right\})$. This group is seen as a subgroup of $\mathrm{Homeo}_{0}(\mathbb{R}^{d})$ consisting of homeomorphisms which pointwise fix a neighbourhood of the point $p$. We will call embedded $(d-1)$-dimensional ball of $\mathbb{R}^{d}$ the image of the closed unit ball of $\mathbb{R}^{d-1}= \mathbb{R}^{d-1} \times \left\{0 \right\} \subset \mathbb{R}^{d}$ under a homeomorphism of $\mathbb{R}^{d}$. For an embedded $(d-1)$-dimensional ball $D \subset \mathbb{R}^{d}$ (which is a single point if $d=1$), we denote by $H^{d}_{D}$ the subgroup of $\mathrm{Homeo}_{0}(\mathbb{R}^{d})$ consisting of homeomorphisms which pointwise fix a neighbourhood of the embedded ball $D$. Finally, if $G$ denotes a subgroup of $\mathrm{Homeo}(\mathbb{R}^{d})$, a point $p \in \mathbb{R}^{d}$ is said to be fixed under the group $G$ if it is fixed under all the elements of this group. We denote by $\mathrm{Fix}(G)$ the (closed) set of fixed points of $G$.

The theorems will be deduced from the following propositions. The two first propositions will be proved respectively in Sections 3 and 4.

\begin{proposition} \label{auplusunpoint}
Let $\varphi: \mathrm{Homeo}_{0}(\mathbb{R}^{d}) \rightarrow \mathrm{Homeo}(\mathbb{R})$ be a group morphism. Suppose that no point of the real line is fixed under the group $\varphi(\mathrm{Homeo}_{0}(\mathbb{R}^{d}))$. Then, for any embedded $(d-1)$-dimensional ball $D \subset \mathbb{R}^{d}$, the group $\varphi(H^{d}_{D})$ admits at most one fixed point.
\end{proposition}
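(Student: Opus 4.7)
The plan is to assume for contradiction that $\varphi(H^d_D)$ fixes two distinct points $a < b$ of $\mathbb{R}$, and to derive from this a point fixed by the whole group $\varphi(\mathrm{Homeo}_0(\mathbb{R}^d))$, contradicting the hypothesis. For every embedded $(d-1)$-ball $D' \subset \mathbb{R}^d$, write $F_{D'} := \mathrm{Fix}(\varphi(H^d_{D'}))$; the goal is to show that $F_{D'}$ does not depend on $D'$, and then to exhibit a nonempty global fixed set.

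First I would use transitivity of $\mathrm{Homeo}_0(\mathbb{R}^d)$ on embedded $(d-1)$-balls via isotopy: for any $D, D'$ there is $g \in \mathrm{Homeo}_0(\mathbb{R}^d)$ with $g(D) = D'$, so $g H^d_D g^{-1} = H^d_{D'}$ and hence $\varphi(g)(F_D) = F_{D'}$. In particular, all the sets $F_{D'}$ have the same cardinality, which by assumption is at least $2$.

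The central step, valid for $d \geq 2$, is: given two disjoint embedded $(d-1)$-balls $D_1, D_2$, one can construct a third embedded $(d-1)$-ball $D_3$ whose interior contains $D_1 \cup D_2$ (connect them by a thin tube and thicken inside $\mathbb{R}^d$). Then $H^d_{D_3} \subset H^d_{D_1} \cap H^d_{D_2}$, so
$$ F_{D_3} \supset F_{D_1} \cup F_{D_2}. $$
Since $|F_{D_3}| = |F_{D_1}| = |F_{D_2}|$, this forces $F_{D_1} = F_{D_2}$ (at least when the common cardinality is finite). Any two balls $D, D''$ can then be linked through a third ball placed far away and disjoint from both, yielding a common set $F := F_{D'}$ independent of $D'$. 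Finally, since every element of $\mathrm{Homeo}_0(\mathbb{R}^d)$ is compactly supported it belongs to $H^d_{D'}$ for any ball $D'$ placed outside its support; hence $\mathrm{Homeo}_0(\mathbb{R}^d) = \bigcup_{D'} H^d_{D'}$, so $F$ is pointwise fixed by $\varphi(\mathrm{Homeo}_0(\mathbb{R}^d))$, contradicting the hypothesis.

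The principal difficulties I anticipate are twofold. First, the counting argument above only concludes $F_{D_1} = F_{D_2}$ cleanly when the common cardinality is finite; handling the infinite case will require extra structure, for instance the fact that $\varphi(g)$ is an order-preserving or reversing bijection between $F_D$ and $F_{D'}$, or a preliminary reduction (using simplicity of $H^d_D$ and an analysis of its image in $\mathrm{Homeo}(\mathbb{R})$) to the case where $F_D$ is finite. Second, the enveloping-ball construction breaks for $d = 1$, where a $(d-1)$-ball is a single point; here the substitute is a direct fragmentation argument: for $p_1 \neq p_2$ in $\mathbb{R}$, any homeomorphism of sufficiently small support avoids one of $p_1, p_2$, so $\langle H^1_{p_1}, H^1_{p_2} \rangle = \mathrm{Homeo}_0(\mathbb{R})$, forcing $F_{p_1} \cap F_{p_2} \subset \mathrm{Fix}(\varphi(\mathrm{Homeo}_0(\mathbb{R}))) = \emptyset$, after which one still needs to rule out a pairwise disjoint continuum-indexed family of closed subsets of $\mathbb{R}$ each of cardinality $\geq 2$, presumably by exploiting the resulting $\varphi$-equivariant semiconjugacy $\bigsqcup_p F_p \to \mathbb{R}$.
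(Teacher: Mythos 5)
Your overall strategy (two fixed points of $\varphi(H^d_D)$ should propagate to a global fixed point) is the right one, and several of your ingredients match the paper: the conjugation relation $\varphi(g)(F_D)=F_{D'}$, and the fragmentation statement for $d=1$ (the paper's Lemma \ref{fragpoint}/\ref{fragball}, which it uses in \emph{all} dimensions to get $F_{D}\cap F_{D'}=\emptyset$ for disjoint balls). But the core of your argument has a genuine gap that you yourself flag and do not close: the counting step only works if the sets $F_{D}$ are finite, and there is no a priori reason for that — $F_D$ is just a closed subset of $\mathbb{R}$ and could perfectly well be a Cantor set or a closed countable set. Neither of the escape routes you sketch is viable as stated: simplicity of $H^d_D$ only tells you $\varphi$ restricted to it is injective or trivial, not that its fixed-point set is finite; and your proposed endgame for $d=1$, ``rule out a pairwise disjoint continuum-indexed family of closed subsets of $\mathbb{R}$ each of cardinality $\geq 2$,'' is not a contradiction at all — such families exist (take $\{-x,x\}$ for $x>0$). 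The cardinality/measure obstruction in $\mathbb{R}$ only bites for \emph{open} sets, and exploiting that is precisely the missing idea.

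The paper's mechanism, which works uniformly in all $d\geq 1$ and needs no finiteness, is worth contrasting with yours. First, via fragmentation, disjoint balls give \emph{disjoint} fixed-point sets (note this already clashes with your conclusion $F_{D_1}=F_{D_2}$: were both statements true, all $F_D$ would be empty). Second (Lemma \ref{compl}), given a component $(a_1,a_2)$ of $\mathbb{R}\setminus F_{D_0}$ and a parallel family of balls $D_x$ translated by homeomorphisms $h_{0,x}$, if no $F_{D_x}$ met $(a_1,a_2)$ then the \emph{open intervals} $\varphi(h_{0,x})((a_1,a_2))$ would be pairwise disjoint and uncountable in number — impossible since $\mathbb{R}$ is second countable. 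So some $F_{D'}$ with $D'$ disjoint from $D_0$ meets every gap of $F_{D_0}$. Third, if $F_D$ contained $p_1<p_2$, pick such a $D'$ with a fixed point in $(p_1,p_2)$; writing any $g$ as $g_1g_2g_3$ with $g_1,g_3\in H^d_D$, $g_2\in H^d_{D'}$, one checks $\varphi(g)(r)<p_2$ for any $r<p_1$, so the orbit of $r$ is bounded above and its supremum is a global fixed point — contradiction. Your enveloping-ball construction for $d\geq 2$ is plausible (though it needs an argument that two disjoint flat $(d-1)$-balls lie in a common one), but it is not the step that carries the proof; without an analogue of the separability argument and the bounded-orbit argument, the proposal does not go through.
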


\begin{proposition} \label{aumoinsunpoint}
Let $\varphi: \mathrm{Homeo}_{0}(\mathbb{R}^{d}) \rightarrow \mathrm{Homeo}(\mathbb{R})$ be a group morphism. Then, for any point $p$ in $\mathbb{R}^{d}$, the group $\varphi(G^{d}_{p})$ admits at least one fixed point.
\end{proposition}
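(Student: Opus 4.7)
The plan is to write $G_p^d$ as an increasing union of more accessible subgroups and to obtain a fixed point of $\varphi(G_p^d)$ as a limit of their fixed-point sets. For $n \geq 1$, let $G_n$ be the subgroup of $\mathrm{Homeo}_0(\mathbb{R}^d)$ consisting of homeomorphisms that pointwise fix an open neighbourhood of $\bar{B}(p,1/n)$; then $G_1 \subset G_2 \subset \cdots$ and $G_p^d = \bigcup_n G_n$. Setting $F_n := \mathrm{Fix}(\varphi(G_n))$, the sets $F_n$ are closed, $F_1 \supset F_2 \supset \cdots$, and $\bigcap_n F_n = \mathrm{Fix}(\varphi(G_p^d))$ is precisely what we want to be non-empty.

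To impose additional structure I would fix a contracting element $T \in \mathrm{Homeo}_0(\mathbb{R}^d)$ with $T(p)=p$ and $T(\bar{B}(p,1/n)) = \bar{B}(p,1/(2n))$ for small enough $1/n$. A direct check using the characterisation of $G_n$ gives $G_n \subset T G_n T^{-1} \subset G_{2n}$, which on the target side translates into $F_{2n} \subset \varphi(T)(F_n) \subset F_n$. This self-similarity would let me pass from non-emptiness of each $F_n$ to non-emptiness of the intersection, by a fixed-point argument for $\varphi(T)$ acting on the nested family.

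The heart of the argument, and the main obstacle I anticipate, is to prove that each $F_n$ is non-empty. For this I would exploit the commutation structure inside $G_n$: pack an infinite family of pairwise disjoint embedded closed balls $B_1, B_2, \ldots$ into $\mathbb{R}^d - \bar{B}(p,1/n)$, chosen so that they are pairwise conjugate by elements of $G_n$. The subgroups $\mathrm{Homeo}_c(B_k) \subset G_n$ then pairwise commute (disjoint supports), are pairwise conjugate in $G_n$, and these properties descend to their images in $\mathrm{Homeo}(\mathbb{R})$. The core algebraic claim is that, for an infinite family of non-trivial, pairwise commuting, pairwise conjugate subgroups of $\mathrm{Homeo}(\mathbb{R})$, the commutation forces the non-fixed sets to be pairwise disjoint open subsets of $\mathbb{R}$ (using that each $\varphi(\mathrm{Homeo}_c(B_k))$ is either trivial or an injective copy of the simple group $\mathrm{Homeo}_c(B_k)$, whose action on its own non-fixed set should have trivial centraliser in $\mathrm{Homeo}$ of that set). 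Infinitely many pairwise disjoint proper open subsets of $\mathbb{R}$ cannot cover $\mathbb{R}$, so the complement is a non-empty closed set of common fixed points. Promoting this to a fixed point of all of $\varphi(G_n)$ should then follow by combining the conjugation-symmetry between the $B_k$ with a fragmentation of $G_n$ by compactly supported subgroups. Without any continuity hypothesis on $\varphi$, all of these steps must be purely algebraic, which is where the genuine difficulty of the proposition lies.
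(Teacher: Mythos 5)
Your overall architecture runs parallel to the paper's (exhaust $G^{d}_{p}$ by the subgroups fixing neighbourhoods of shrinking balls centered at $p$, and produce fixed points of each of these by playing commuting ball-supported subgroups against one another), but two of your steps have genuine gaps. The most serious is the ``core algebraic claim'': that pairwise commuting, pairwise conjugate, non-trivial copies of $\mathrm{Homeo}_{c}(B_{k})\cong\mathrm{Homeo}_{0}(\mathbb{R}^{d})$ inside $\mathrm{Homeo}_{+}(\mathbb{R})$ must have pairwise disjoint supports. To justify this along the lines you sketch (``trivial centraliser of the action on its own non-fixed set'') you would first need to classify the possible actions of $\mathrm{Homeo}_{0}(\mathbb{R}^{d})$ on $\mathbb{R}$ --- but that classification is precisely Theorem \ref{roncirc} (for $d=1$) and Lemma \ref{rdonr} (for $d\geq 2$), both of which are proved in the paper \emph{using} Proposition \ref{aumoinsunpoint}; as stated, your argument is circular. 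The paper sidesteps this by taking the commuting subgroups isomorphic to $\mathrm{Homeo}_{\mathbb{Z}}(\mathbb{R})$ instead (Lemma \ref{isomhomeoz}): that group has an infinite cyclic centre, and it is the central generator, combined with Matsumoto's theorem and perfectness (Lemma \ref{perfect}), that makes the classification of its actions on $\mathbb{R}$ (Lemma \ref{homeozsurr}) and hence the disjointness-of-supports statement (Lemma \ref{commhomeoz}) provable from scratch. Your choice of the centerless simple groups $\mathrm{Homeo}_{c}(B_{k})$ discards exactly the structure that makes the key lemma accessible.

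The second gap is the passage from non-emptiness of each $F_{n}$ to non-emptiness of $\bigcap_{n}F_{n}$. Nested non-empty closed subsets of $\mathbb{R}$ can have empty intersection, and your self-similarity relation $F_{2n}=\varphi(T)(F_{n})\subset F_{n}$ is perfectly consistent with that: take $F_{n}$ of the form $[c_{n},+\infty)$ with $\varphi(T)$ a translation, which has no fixed point at all, so no ``fixed-point argument for $\varphi(T)$'' is available. What is actually needed, and what the paper proves, is that each set $\mathrm{Fix}(\varphi(G^{d}_{B}))$ is \emph{compact}: one chooses a ball $B'$ disjoint from $B$, notes that $\mathrm{Fix}(\varphi(G^{d}_{B}))$ and $\mathrm{Fix}(\varphi(G^{d}_{B'}))$ are homeomorphic (the subgroups are conjugate), and shows that if both accumulated at $+\infty$ one could interlace points of the two sets and use the fragmentation $g=g_{1}g_{2}g_{3}$ with $g_{1},g_{3}\in G^{d}_{B}$, $g_{2}\in G^{d}_{B'}$ to bound the whole orbit of a point, yielding a global fixed point and a contradiction. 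With compactness in hand the finite intersection property finishes the proof, and the contracting element $T$ becomes unnecessary. (A smaller issue: a point outside the union of the supports of the $\varphi(\mathrm{Homeo}_{c}(B_{k}))$ is fixed by those particular subgroups, not yet by all of $G_{n}$; the paper's cleaner device is to place one copy of $\mathrm{Homeo}_{\mathbb{Z}}(\mathbb{R})$ \emph{inside} $B$ and observe that its non-empty support lies in $\mathrm{Fix}(\varphi(H))$ for \emph{every} subgroup $H$ of the generating family of $G^{d}_{B}$.)
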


\begin{proposition} \label{rdoncirc}
For any group morphism $\psi: \mathrm{Homeo}_{0}(\mathbb{R}^{d}) \rightarrow \mathrm{Homeo}(\mathbb{S}^{1})$, the group $\psi(\mathrm{Homeo}_{0}(\mathbb{R}^{d}))$ has a fixed point.
\end{proposition}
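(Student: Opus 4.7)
I plan to argue by contradiction: assume $\psi(G)$ admits no fixed point on $\mathbb{S}^{1}$, where $G := \mathrm{Homeo}_{0}(\mathbb{R}^{d})$, and derive a contradiction via case analysis on a minimal $\psi(G)$-invariant closed set.

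\textbf{Initial reductions.} Since $G$ is simple, $\psi$ is either trivial---which would make every point of $\mathbb{S}^{1}$ a fixed point, contradicting the standing assumption---or injective; and since $G$ is perfect, $\psi(G) \subset \mathrm{Homeo}_{+}(\mathbb{S}^{1})$ (orientation reversal would define a morphism to $\mathbb{Z}/2\mathbb{Z}$). By compactness of $\mathbb{S}^{1}$ and Zorn's lemma, a minimal non-empty closed $\psi(G)$-invariant set $\Lambda$ exists, and the standard trichotomy for group actions on the circle forces $\Lambda$ to be finite, a Cantor set, or all of $\mathbb{S}^{1}$.

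\textbf{Eliminating the easy cases.} A finite $\Lambda$ would yield a morphism $G \to S_{n}$, trivial by simplicity (since $G$ is infinite), so $\Lambda$ would be pointwise fixed---contradiction. For $\Lambda$ a Cantor set, I would collapse each connected component of $\mathbb{S}^{1} \setminus \Lambda$ to a point, producing a semi-conjugate minimal action $\bar\psi: G \to \mathrm{Homeo}_{+}(\mathbb{S}^{1})$; if $\bar\psi$ is trivial then $\psi(G)$ preserves each complementary arc and fixes its endpoints (using orientation preservation)---contradiction; otherwise $\bar\psi$ is a non-trivial minimal action, to which the minimal-case analysis applies. In the minimal case itself, if the action preserves a Borel probability measure $\mu$, minimality forces $\mu$ to have full support and no atoms, and its cumulative distribution function conjugates the action into $\mathrm{SO}(2)$; the resulting morphism $G \to \mathrm{SO}(2)$ is then trivial by simplicity of $G$ and abelianity of $\mathrm{SO}(2)$, so the action is trivial---contradicting minimality.

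\textbf{The main obstacle.} The remaining case---a minimal action of $\psi(G)$ on $\mathbb{S}^{1}$ with no invariant probability measure---is where the real difficulty lies. My plan is to exploit the fact that $G$ is uniformly perfect: every element is a product of a bounded number of commutators, by the classical results on groups of homeomorphisms of manifolds. Combined with the fact that the translation number on the universal cover $\widetilde{\mathrm{Homeo}_{+}(\mathbb{S}^{1})}$ is a homogeneous quasi-morphism of bounded defect, uniform perfectness forces every element $\psi(f)$ to have rotation number zero and, more importantly, makes the Euler cocycle admit a bounded primitive on $G$. The vanishing of the bounded Euler class then produces, by Matsumoto's theorem, a $\psi(G)$-invariant probability measure on $\mathbb{S}^{1}$, contradicting the standing assumption of this case and thereby completing the proof.
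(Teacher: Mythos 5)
Your initial reductions and the treatment of the finite, Cantor, and invariant-measure cases are fine, but all the weight of the proof rests on the last case, and there the key step is not justified. You claim that uniform perfectness of $G=\mathrm{Homeo}_{0}(\mathbb{R}^{d})$, combined with the translation-number quasi-morphism, forces all rotation numbers to vanish and gives a bounded primitive for the Euler cocycle. This implication is false as stated: $\mathrm{Homeo}_{+}(\mathbb{S}^{1})$ is itself uniformly perfect, yet its tautological action on the circle contains elements of every rotation number and has non-vanishing bounded Euler class. The point is that the translation number is a homogeneous quasi-morphism on the universal cover $\widetilde{\mathrm{Homeo}_{+}(\mathbb{S}^{1})}$, not on $\mathrm{Homeo}_{+}(\mathbb{S}^{1})$. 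To transport it to $G$ as a genuine quasi-morphism (which uniform perfectness would then annihilate, making the section $g\mapsto\widetilde{\psi(g)}$ bounded and the Euler cocycle the coboundary of a bounded function), you need a homomorphic lift $\tilde\psi:G\to\widetilde{\mathrm{Homeo}_{+}(\mathbb{S}^{1})}$, and the existence of such a lift is exactly the vanishing of the ordinary Euler class in $H^{2}(G;\mathbb{Z})$. In cohomological terms, uniform perfectness only shows that the comparison map $H^{2}_{b}(G;\mathbb{R})\to H^{2}(G;\mathbb{R})$ is injective; it says nothing about whether the bounded Euler class dies in ordinary cohomology.

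The missing ingredient is therefore a vanishing theorem for $H^{2}(G;\mathbb{Z})$ (Mather's acyclicity of $\mathrm{Homeo}_{c}(\mathbb{R}^{d})$, \cite{Mat3}) or, more directly, for $H^{2}_{b}(G;\mathbb{Z})$ (\cite{MM}). Once either is granted, the integral bounded Euler class vanishes outright and one gets a global fixed point immediately, with no need for the minimal-set trichotomy --- this is precisely the paper's two-line proof. Two smaller points: the step from a vanishing real bounded Euler class to an invariant probability measure is due to Ghys rather than Matsumoto (Matsumoto's theorem is the semi-conjugacy classification by the integral bounded class), and in the Cantor case you should make explicit that the collapsed action is an action of $G$ on a genuine circle to which the minimal-case analysis applies verbatim.
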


\begin{proof}[Proof of Proposition \ref{rdoncirc}]
Recall that the group $\mathrm{Homeo}_{0}(\mathbb{R}^{d})$ is infinite and simple and that the group $\mathrm{Homeo}(\mathbb{S}^{1})/ \mathrm{Homeo}_{0}(\mathbb{S}^{1})$ is isomorphic to $\mathbb{Z}/2 \mathbb{Z}$. Hence any morphism $\mathrm{Homeo}_{0}(\mathbb{R}^{d}) \rightarrow \mathrm{Homeo}(\mathbb{S}^{1})/ \mathrm{Homeo}_{0}(\mathbb{S}^{1})$ is trivial. Therefore, the image of a morphism $\mathrm{Homeo}_{0}(\mathbb{R}^{d}) \rightarrow \mathrm{Homeo}(\mathbb{S}^{1})$ is contained in $\mathrm{Homeo}_{0}(\mathbb{S}^{1})$.

For some background about the bounded cohomology of groups and the bounded Euler class of a group acting on a circle, see Section 6 in \cite{Ghy1}. By \cite{Mat3} and \cite{MM}:
$$H^{2}_{b}( \mathrm{Homeo}_{0}(\mathbb{R}^{d}), \mathbb{Z})= \left\{ 0 \right\}.$$
Therefore, the bounded Euler class of a morphism $\mathrm{Homeo}_{0}(\mathbb{R}^{d}) \rightarrow \mathrm{Homeo}_{0}(\mathbb{S}^{1})$ vanishes: this action has a fixed point.
\end{proof}

\begin{proof}[Proof of Theorem \ref{manoncirc}]
Let $d = \mathrm{dim}(M)$. The theorem will be deduced from the following lemma.

\begin{lemma} \label{rdonr}
Any group morphism $\mathrm{Homeo}_{0}(\mathbb{R}^{d}) \rightarrow \mathrm{Homeo}(\mathbb{R})$ is trivial.
\end{lemma}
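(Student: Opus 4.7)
The plan is to argue by contradiction. Assume $\varphi$ is nontrivial; by simplicity of $\mathrm{Homeo}_{0}(\mathbb{R}^{d})$ it is then injective. I will construct an equivariant point-valued map $F:\mathbb{R}^{d}\to\mathbb{R}$ whose constancy (when $d\geq 2$) yields a global fixed point of $\varphi(\mathrm{Homeo}_{0}(\mathbb{R}^{d}))$, contradicting (after a preliminary reduction) the absence of such a point.

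First I would reduce to the case where $\varphi(\mathrm{Homeo}_{0}(\mathbb{R}^{d}))$ has no global fixed point on $\mathbb{R}$. Let $K$ be the closed set of global fixed points; if $K=\mathbb{R}$ then $\varphi$ is trivial. Otherwise, $\mathrm{Homeo}_{0}(\mathbb{R}^{d})$ acts via $\varphi$ on the countable set of components of $\mathbb{R}\setminus K$; the kernel of this permutation action is normal in the simple group $\mathrm{Homeo}_{0}(\mathbb{R}^{d})$, hence trivial or everything. An algebraic rigidity input for $\mathrm{Homeo}_{0}(\mathbb{R}^{d})$---ruling out a faithful action on a countable set---excludes the injective case. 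Hence every element preserves every component; restricting to one component $I$ and identifying $I\cong\mathbb{R}$ produces a morphism $\varphi'$ to which the two propositions apply with no global fixed point.

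Next, for any embedded $(d-1)$-ball $D$ and any $p\in D$, a homeomorphism fixing a neighborhood of $D$ also fixes a neighborhood of $p$, so $H^{d}_{D}\subset G^{d}_{p}$ and $\mathrm{Fix}(\varphi'(G^{d}_{p}))\subset\mathrm{Fix}(\varphi'(H^{d}_{D}))$. By Proposition~\ref{aumoinsunpoint} the left-hand set is nonempty; by Proposition~\ref{auplusunpoint} the right-hand set has at most one element. Both are therefore singletons, equal to one another, and the unique point $F(p)$ depends only on $D$, not on $p\in D$. Thus $F:\mathbb{R}^{d}\to\mathbb{R}$ is constant on every embedded $(d-1)$-ball. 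For $d\geq 2$, any two points of $\mathbb{R}^{d}$ lie on a common embedded $(d-1)$-ball (e.g.\ a hyperplane disk through them both), so $F$ is globally constant, say $F\equiv c$.

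Finally, by the fragmentation property, any $f\in\mathrm{Homeo}_{0}(\mathbb{R}^{d})$ can be written as a finite product $f_{1}\cdots f_{n}$ with each $f_{i}$ supported in an open ball $B_{i}$; picking $p_{i}\notin\overline{B_{i}}$ for each $i$ gives $f_{i}\in G^{d}_{p_{i}}$, so $\varphi'(f_{i})$ fixes $F(p_{i})=c$, and hence $\varphi'(f)$ fixes $c$. Thus $c$ is a global fixed point of $\varphi'$, contradicting the reduction. The main obstacle I anticipate is the reduction step: with no continuity of $\varphi$ assumed a priori, ruling out a faithful action of $\mathrm{Homeo}_{0}(\mathbb{R}^{d})$ on the countable set $\pi_{0}(\mathbb{R}\setminus K)$ requires a genuine algebraic rigidity property of the group, and the rest of the argument hinges on being able to invoke Proposition~\ref{auplusunpoint} under its no-fixed-point hypothesis.
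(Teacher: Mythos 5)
Your core argument is sound and rests on the same ingredients as the paper: Propositions \ref{auplusunpoint} and \ref{aumoinsunpoint}, the inclusion $H^{d}_{D}\subset G^{d}_{p}$ for $p\in D$, and the fact that for $d\geq 2$ any two points lie on a common embedded $(d-1)$-ball. You package the endgame differently, though: the paper shows the sets $\mathrm{Fix}(\varphi(G^{d}_{p}))$ are nonempty and \emph{pairwise disjoint} (via Lemma \ref{fragpoint}), so that $\mathrm{Fix}(\varphi(H^{d}_{D}))\supset\bigcup_{p\in D}\mathrm{Fix}(\varphi(G^{d}_{p}))$ is infinite, contradicting Proposition \ref{auplusunpoint} outright; you instead deduce that all these sets are equal singletons, get a globally constant map $F\equiv c$, and feed $c$ back into the no-fixed-point hypothesis. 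Both routes work and are of comparable length. One economy: your final appeal to the $C^{0}$ fragmentation theorem (a genuinely deep result in this category) is unnecessary, since Lemma \ref{fragpoint} already writes every $f$ as $f_{1}f_{2}f_{3}$ with factors in $G^{d}_{p_{1}}\cup G^{d}_{p_{2}}$; hence $\bigcup_{p}G^{d}_{p}$ generates the whole group and $c$ is fixed by the entire image.

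The one genuine soft spot is the reduction step, which you correctly flag. The ``algebraic rigidity input ruling out a faithful action on a countable set'' is neither needed nor readily available; the standard and much simpler argument is the one the paper uses explicitly in the proof of Theorem \ref{roncirc}: since $\mathrm{Homeo}_{0}(\mathbb{R}^{d})$ is simple, it admits no nontrivial morphism to $\mathbb{Z}/2\mathbb{Z}$, so every element of $\varphi(\mathrm{Homeo}_{0}(\mathbb{R}^{d}))$ preserves the orientation of $\mathbb{R}$. An orientation-preserving homeomorphism of $\mathbb{R}$ that fixes the closed set $K$ pointwise fixes the finite endpoints of each component of $\mathbb{R}\setminus K$ and is increasing, hence preserves each such component setwise. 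This kills your permutation action on $\pi_{0}(\mathbb{R}\setminus K)$ at once and completes the reduction; with that substitution your proof is correct.
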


Let us see why this lemma implies the theorem. Consider a morphism $\mathrm{Homeo}_{0}(M) \rightarrow \mathrm{Homeo}_{0}(N)$. Take an open set $U \subset M$ homeomorphic to $\mathbb{R}^{d}$ and let us denote by $\mathrm{Homeo}_{0}(U)$ the subgroup of $\mathrm{Homeo}_{0}(M)$ consisting of homeomorphisms supported in $U$. By Lemma \ref{rdonr} and Proposition \ref{rdoncirc}, the restriction of this morphism to the subgroup $\mathrm{Homeo}_{0}(U)$ is trivial. Moreover, as the group $\mathrm{Homeo}_{0}(M)$ is simple, such a group morphism is either one-to-one or trivial: it is necessarily trivial in this case.
\end{proof}

\begin{proof}[Proof of Lemma \ref{rdonr}]
Take a group morphism $\varphi: \mathrm{Homeo}_{0}(\mathbb{R}^{d}) \rightarrow \mathrm{Homeo}(\mathbb{R})$. Suppose by contradiction that this morphism is nontrivial. Replacing if necessary $\mathbb{R}$ with a connected component of the complement of the closed set $\mathrm{Fix}(\varphi(\mathrm{Homeo}_{0}(\mathbb{R}^{d})))$, we can suppose that the group $\varphi(\mathrm{Homeo}_{0}(\mathbb{R}^{d}))$ has no fixed points.

Let us prove that, for any points $p_{1} \neq p_{2}$ in $\mathbb{R}^{d}$:
$$ \mathrm{Fix}(\varphi(G_{p_{1}}^{d})) \cap \mathrm{Fix}(\varphi(G_{p_{2}}^{d}))= \emptyset.$$
The proof of this fact requires the following lemma.

\begin{lemma} \label{fragpoint}
Let $d' \geq 1$ be an integer. Let $p_{1} \neq p_{2}$ be two distinct points in $\mathbb{R}^{d'}$. Then, for any homeomorphism $f$ in $\mathrm{Homeo}_{0}(\mathbb{R}^{d'})$, there exist homeomorphisms $f_{1}, \ f_{3}$ in $G^{d'}_{p_{1}}$ and $f_{2}$ in $G^{d'}_{p_{2}}$  such that:
$$ f=f_{1}f_{2}f_{3}.$$
\end{lemma}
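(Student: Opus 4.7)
My approach is a two-stage decomposition. First, I normalize $f$ by left-multiplication with an element of $G^{d'}_{p_1}$ to arrange that the new map does not send $p_1$ to $p_2$. Concretely, if $f(p_1) = p_2$, I pick $f_1 \in G^{d'}_{p_1}$ with $f_1(p_2) \neq p_2$ (such an $f_1$ exists because $G^{d'}_{p_1}$ acts transitively on each connected component of $\mathbb{R}^{d'} \setminus \{p_1\}$), and replace $f$ by $f' := f_1^{-1} f$; if $f(p_1) \neq p_2$ already, I take $f_1 = \mathrm{id}$ and $f' = f$. In dimension $1$, assuming $p_1 < p_2$, I additionally arrange the sharper condition $f'(p_1) < p_2$ by choosing $f_1(p_2)$ strictly larger than $f(p_1)$; this is possible because elements of $\mathrm{Homeo}_0(\mathbb{R})$ are monotone.

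In the second stage I decompose $f' = f_2 f_3$ with $f_2 \in G^{d'}_{p_2}$ and $f_3 \in G^{d'}_{p_1}$. Fix small disjoint closed balls $U \ni p_1$ and $V \ni p_2$, shrunk so that $f'(U) \cap V = \emptyset$; this is possible since $f'(p_1) \neq p_2$. Define a partial map on $U \cup V$ by $f'$ on $U$ and the identity on $V$; its images $f'(U)$ and $V$ are disjoint, so this is a homeomorphism onto $f'(U) \cup V$. Extend it to a compactly supported homeomorphism $f_2$ of $\mathbb{R}^{d'}$ isotopic to the identity. By construction $f_2$ is the identity on $V$, so $f_2 \in G^{d'}_{p_2}$. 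Setting $f_3 := f_2^{-1} f'$, we get $f_3 = \mathrm{id}$ on $U$ (because $f_2 = f'$ there), so $f_3 \in G^{d'}_{p_1}$. Combining, $f = f_1 f' = f_1 f_2 f_3$, the required decomposition.

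The main obstacle is the extension step that produces $f_2$. In dimension $d' \geq 2$ it rests on an ambient isotopy extension: the embedding $f'|_U$ is isotopic, through embeddings of $U$ avoiding $V$, to the inclusion, and such an isotopy extends to an ambient isotopy of $\mathbb{R}^{d'}$ fixing $V$ pointwise and equal to the identity outside a large compact set, whose time-one map is the desired $f_2$. In dimension $1$ the extension reduces to producing a compactly supported monotone homeomorphism of $\mathbb{R}$ matching the prescribed values on $U$ and $V$; the order condition $f'(p_1) < p_2$ arranged in Stage $1$ is exactly what keeps the interpolation monotone across the gap between $U$ and $V$, and is the reason one cannot in general do the construction without the first normalisation step.
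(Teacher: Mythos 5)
Your proof is correct and follows essentially the same route as the paper: first normalise by an element of $G^{d'}_{p_1}$ so that $f_1^{-1}f$ sends $p_1$ into the component of $\mathbb{R}^{d'}\setminus\{p_2\}$ containing $p_1$, then build $f_2\in G^{d'}_{p_2}$ agreeing with $f_1^{-1}f$ near $p_1$ and set $f_3=f_2^{-1}f_1^{-1}f$. The only cosmetic difference is that you justify the extension step by topological isotopy extension, where the paper invokes the Sch\"onflies theorem for $d'=2$ and the Kirby--Quinn annulus theorem for $d'\geq 3$ --- the same circle of results.
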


\begin{proof}
Take a homeomorphism $f$ in $\mathrm{Homeo}_{0}(\mathbb{R}^{d'})$. Let $f_{1}$ be a homeomorphism in $G^{d'}_{p_{1}}$ such that $f_{1}^{-1}$ sends the point $f(p_{1})$ to a point which lies in the same connected component of $\mathbb{R}^{d'}- \left\{ p_{2} \right\}$ as the point $p_{1}$. Let $f_{2}$ be a homeomorphism in $G^{d'}_{p_{2}}$ which is equal to $f_{1}^{-1}f$ in a neighbourhood of the point $p_{1}$. The existence of the homeomorphism $f_{2}$ is easy to prove when $d'=1$, is a consequence of the Schönflies Theorem when $d'=2$ and of the annulus theorem by Kirby and Quinn when $d' \geq 3$ (see \cite{Kir} and \cite{Qui}). Changing if necessary the homeomorphism $f_{2}$ into the composition of the homeomorphism $f_{2}$ with a homeomorphism supported in a small neighbourhood of the point $p_{1}$, the homeomorphism $f_{3}=f_{2}^{-1}f_{1}^{-1}f$ belongs to $G^{d'}_{p_{1}}$.
\end{proof}

Take two points $p_{1}$ and $p_{2}$ in $\mathbb{R}^{d}$. Suppose by contradiction that $\mathrm{Fix}(\varphi(G_{p_{1}}^{d})) \cap \mathrm{Fix}(\varphi(G^{d}_{p_{2}})) \neq \emptyset$. By Lemma \ref{fragpoint}, a point in this set is a fixed point of the group $\varphi(\mathrm{Homeo}_{0}(\mathbb{R}^{d}))$, a contradiction.

By Proposition \ref{aumoinsunpoint}, the sets $\mathrm{Fix}(\varphi(G^{d}_{p}))$, for $p \in \mathbb{R}^{d}$ are nonempty. We just saw that they are pairwise disjoint. Recall that, for any embedded $(d-1)$-dimensional ball $D$, the set $\mathrm{Fix}(\varphi(H_{D}^{d}))$ contains the union of the sets $\mathrm{Fix}(\varphi(G^{d}_{p}))$ over the points $p$ in the closed set $D$. Hence, this set has infinitely many points as $d \geq 2$, a contradiction with Proposition \ref{auplusunpoint}. 
\end{proof}

\begin{proof}[Proof of Theorem \ref{roncirc}]
Let $\varphi : \mathrm{Homeo}_{0}(\mathbb{R}) \rightarrow \mathrm{Homeo}(N)$ be a nontrivial group morphism. By Proposition \ref{rdoncirc}, we can suppose that the manifold $N$ is the real line $\mathbb{R}$. Replacing $\mathbb{R}$ with a connected component of the complement of the closed set $\mathrm{Fix}(\varphi(\mathrm{Homeo}_{0}(\mathbb{R})))$ if necessary, we can suppose that the group $\varphi(\mathrm{Homeo}_{0}(\mathbb{R}))$ has no fixed point. Recall that the group $\mathrm{Homeo}_{0}(\mathbb{R})$ is simple. Hence any morphism $\mathrm{Homeo}_{0}(\mathbb{R}) \rightarrow \mathbb{Z}/ 2 \mathbb{Z}$ is trivial. Thus, any element of the group $\varphi(\mathrm{Homeo}_{0}(\mathbb{R}))$ preserves the orientation of $\mathbb{R}$. 

By Propositions \ref{auplusunpoint} and \ref{aumoinsunpoint}, for any real number $x$, the group $\varphi(G_{x}^{1})$ has a unique fixed point $h(x)$. Take a homeomorphism $f$ in $\mathrm{Homeo}_{0}(\mathbb{R})$ which sends a point $x$ in $\mathbb{R}$ to a point $y$ in $\mathbb{R}$. Then $fG_{x}^{1}f^{-1}=G^{1}_{y}$ and, taking the image under $\varphi$, $\varphi(f) \varphi(G_{x}^{1}) \varphi(f)^{-1}=\varphi(G^{1}_{y})$. Hence $\varphi(f)(\mathrm{Fix}(\varphi(G^{1}_{x})))= \mathrm{Fix}(\varphi(G^{1}_{y}))$. Therefore, for any homeomorphism $f$ in $\mathrm{Homeo}_{0}(\mathbb{R})$, $\varphi(f)h=hf$.

Let us prove that the map $h$ is one-to-one. Suppose by contradiction that there exist real numbers $x \neq y$ such that $h(x)=h(y)$. The point $h(x)$ is fixed under the groups $\varphi(G^{1}_{x})$ and $\varphi(G^{1}_{y})$. However, the groups $G^{1}_{x}$ and $G^{1}_{y}$ generate the group $\mathrm{Homeo}_{0}(\mathbb{R})$ by Lemma \ref{fragpoint}. Therefore, the point $h(x)$ is fixed under the group $\varphi(\mathrm{Homeo}_{0}(\mathbb{R}))$, a contradiction.

Now we prove that the map $h$ is either strictly increasing or strictly decreasing. Fix two points $x_{0} < y_{0}$ of the real line. For any two points $x<y$ of the real line, let us consider a homeomorphism $f_{x,y}$ in $\mathrm{Homeo}_{0}(\mathbb{R})$ such that $f_{x,y}(x_{0})=x$ and $f_{x,y}(y_{0})=y$. As $\varphi(f_{x,y})h=hf_{x,y}$, the homeomorphism $\varphi(f_{x,y})$ sends the ordered pair $(h(x_{0}),h(y_{0}))$ to the ordered pair $(h(x),h(y))$. As the homeomorphism $\varphi(f_{x,y})$ is strictly increasing:
$$ h(x)<h(y)\Leftrightarrow h(x_{0}) < h(y_{0})$$
and
$$ h(x)>h(y)\Leftrightarrow h(x_{0}) > h(y_{0}).$$
Hence the map $h$ is either strictly increasing or strictly decreasing.

Now, it remains to prove that the map $h$ is onto to complete the proof. Suppose by contradiction that the map $h$ is not onto. Notice that the set $h(\mathbb{R})$ is preserved under the group $\varphi(\mathrm{Homeo}_{0}(\mathbb{R}))$. If this set had a lower bound or an upper bound, then the supremum of this set or the infimum of this set would provide a fixed point for the group $\varphi(\mathrm{Homeo}_{0}(\mathbb{R}))$, a contradiction. This set has neither upper bound nor lower bound. Let $C$ be a connected component of the complement of the set $h(\mathbb{R})$. To simplify the exposition of the proof, we suppose that the map $h$ is increasing. Let us denote by $x_{0}$ the supremum of the set of points $x$ such that the real number $h(x)$ is lower than any point in the interval $C$. Then the point $h(x_{0})$ is necessarily in the closure of $C$: otherwise, there would exist an interval in the complementary of $h(\mathbb{R})$ which strictly contains the interval C. We suppose for instance that the point $h(x_{0})$ is the supremum of the interval $C$. Choose, for each couple $(z_{1}, z_{2})$ of real numbers, a homeomorphism $g_{z_{1},z_{2}}$ in $\mathrm{Homeo}_{0}(\mathbb{R})$ which sends the point $z_{1}$ to the point $z_{2}$. Then the sets  $g_{x_{0},x}(C)$, for $x$ in $\mathbb{R}$, are pairwise disjoint: they are pairwise distinct as their suprema are pairwise distinct (the supremum of the set  $g_{x_{0},x}(C)$ is the point $h(x)$). Moreover, those sets do not contain any point of $h(\mathbb{R})$ and the infima of those sets are accumulated by points in $h(\mathbb{R})$. Hence, these sets are pairwise disjoint. Then the set $C$ has necessarily an empty interior as the topological space $\mathbb{R}$ is second-countable. Therefore $C = \left\{ h(x_{0}) \right\}$, which is not possible.
\end{proof}

\section{Proof of Proposition \ref{auplusunpoint}}

The proof of this proposition is similar to the proofs of Lemmas 3.6 and 3.7 in \cite{Mil}. We need the following lemma. The proof of this lemma is almost identical to the proof of Lemma \ref{fragpoint} and is omitted.

\begin{lemma} \label{fragball}
Take two disjoint embedded $(d-1)$-dimensional balls $D$ and $D'$ in $\mathbb{R}^{d}$. For any homeomorphism $f$ in $\mathrm{Homeo}_{0}(\mathbb{R}^{d})$, there exist homeomorphisms $h_{1}$, $h_{3}$ in $H^{d}_{D}$ and $h_{2}$ in $H^{d}_{D'}$ such that
$$ h= h_{1}h_{2}h_{3}.$$
\end{lemma}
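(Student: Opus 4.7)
My plan is to imitate the proof of Lemma \ref{fragpoint}, with the embedded $(d-1)$-balls $D$ and $D'$ playing the roles of the points $p_{1}$ and $p_{2}$. The case $d=1$ is precisely Lemma \ref{fragpoint}, so I focus on $d \geq 2$. Given $f \in \mathrm{Homeo}_{0}(\mathbb{R}^{d})$, I will produce the three factors $h_{1}, h_{2}, h_{3}$ in succession.

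First, I would choose $h_{1} \in H^{d}_{D}$ so that the ball $h_{1}^{-1}(f(D))$ is disjoint from $D'$. This is possible because $\mathbb{R}^{d}-D'$ is connected for $d \geq 2$---an embedded $(d-1)$-ball with boundary does not separate $\mathbb{R}^{d}$---so one can find a homeomorphism of $\mathbb{R}^{d}$ supported away from a neighbourhood of $D$ which carries the compact set $f(D)$ into $\mathbb{R}^{d}-D'$.

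Next, I would produce $h_{2} \in H^{d}_{D'}$ which agrees with $h_{1}^{-1}f$ on some neighbourhood of $D$. The task is to extend the germ of $h_{1}^{-1}f$ along $D$---a local homeomorphism sending $D$ to $h_{1}^{-1}(f(D))$, with both balls lying in $\mathbb{R}^{d}-D'$---to a homeomorphism of $\mathbb{R}^{d}$ which is the identity on a neighbourhood of $D'$. Exactly as in the proof of Lemma \ref{fragpoint}, this extension is provided by the Sch\"onflies theorem when $d=2$ and by the Kirby-Quinn annulus theorem when $d \geq 3$. Finally, I set $h_{3}=h_{2}^{-1}h_{1}^{-1}f$: since $h_{2}=h_{1}^{-1}f$ on a neighbourhood of $D$, the homeomorphism $h_{3}$ is the identity there, so $h_{3} \in H^{d}_{D}$, and $f=h_{1}h_{2}h_{3}$ by construction.

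The main obstacle lies in the second step: matching the germ along an entire $(d-1)$-ball, rather than at a single point as in Lemma \ref{fragpoint}, requires the full force of the codimension-one topological extension theorems, and one must also take care that the restriction of $h_{1}^{-1}f$ near $D$ can be matched by a homeomorphism fixing $D'$ (rather than merely an abstract extension). The preparation in step one, which places both $D$ and $h_{1}^{-1}(f(D))$ in $\mathbb{R}^{d}-D'$, is designed precisely to put us in the setting where the Sch\"onflies and annulus theorems apply.
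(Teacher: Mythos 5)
Your proposal is correct and follows exactly the route the paper intends: the paper omits the proof of this lemma, saying only that it is ``almost identical to the proof of Lemma \ref{fragpoint}'', and your three steps (push $f(D)$ off $D'$ by an element of $H^{d}_{D}$ using that $\mathbb{R}^{d}-D'$ is connected, match the germ of $h_{1}^{-1}f$ along $D$ by an element of $H^{d}_{D'}$ via the Sch\"onflies/annulus theorems, and absorb the remainder $h_{3}=h_{2}^{-1}h_{1}^{-1}f$ into $H^{d}_{D}$) are precisely the ball-versions of the three steps in that proof. The level of detail, including the reduction of the case $d=1$ to Lemma \ref{fragpoint} itself, matches the paper's own standard.
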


For such an embedded $(d-1)$-dimensional ball $D$, let $F_{D}=\mathrm{Fix}(\varphi(H_{D}^{d}))$. 
Let us prove that these sets are pairwise homeomorphic. Take two embedded $(d-1)$-dimensional balls $D$ an $D'$ and take a homeomorphism $h$ in $\mathrm{Homeo}_{0}(\mathbb{R}^{d})$ which sends the set $D$ onto $D'$. Observe that $hH_{D}^{d}h^{-1}=H^{d}_{D'}$ and that $\varphi(h) \varphi(H_{D}^{d}) \varphi(h)^{-1}=\varphi(H^{d}_{D'})$. Therefore: $\varphi(h)(F_{D})=F_{D'}$.

In the case where these sets are all empty, there is nothing to prove. We suppose in what follows that they are not empty.

Given two disjoint embedded $(d-1)$-dimensional balls $D$ and $D'$, Lemma \ref{fragball} implies, as in the proof of Lemma \ref{rdonr}:
$$ F_{D} \cap F_{D'}= \emptyset.$$

\begin{lemma} \label{compl}
Fix an embedded $(d-1)$-dimensional ball $D_{0}$ of $\mathbb{R}^{d}$. Then any connected component $C$ of the complement of $F_{D_{0}}$ meets one of the sets $F_{D}$, where $D$ is an embedded $(d-1)$-dimensional ball disjoint from $D_{0}$.
\end{lemma}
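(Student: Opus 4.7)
The plan is to argue by contradiction: assume that $F_D \cap C = \emptyset$ for every embedded $(d-1)$-dimensional ball $D$ disjoint from $D_0$, and produce an uncountable family of pairwise disjoint non-empty open intervals in $\mathbb{R}$, contradicting the second countability of $\mathbb{R}$. The overall mechanism parallels the surjectivity argument at the end of the proof of Theorem \ref{roncirc}.

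The key intermediate step is to show that, under this hypothesis, $\varphi(\alpha)(C) \cap C = \emptyset$ for every $\alpha \in \mathrm{Homeo}_{0}(\mathbb{R}^{d})$ satisfying $\alpha(D_0) \cap D_0 = \emptyset$. Write $C = (a,b)$ with $a, b \in F_{D_0} \cup \{-\infty, +\infty\}$ (at least one of which is finite since $F_{D_0}$ is nonempty), and set $C' := \varphi(\alpha)(C) = (a',b')$, a component of $\mathbb{R} \setminus F_{\alpha(D_0)}$, so $a', b' \in F_{\alpha(D_0)} \cup \{-\infty, +\infty\}$. The disjointness $F_{\alpha(D_0)} \cap F_{D_0} = \emptyset$ established just before the lemma, combined with the standing hypothesis $F_{\alpha(D_0)} \cap C = \emptyset$, forces any finite endpoint of $C'$ to avoid the closed interval $[a,b]$. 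A short case analysis on the position of $(a',b')$ relative to $(a,b)$ then leaves exactly two possibilities: either $C' \cap C = \emptyset$, or $C' \supsetneq C$. The second case is excluded by applying the same reasoning to $\alpha^{-1}$ (which also sends $D_0$ off itself): it would force $\varphi(\alpha^{-1})(C) \subsetneq C$, a configuration the same enumeration rules out.

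To conclude, I will produce an uncountable family $\{\alpha_t\}_{t \in (0,1)}$ in $\mathrm{Homeo}_{0}(\mathbb{R}^{d})$ with $\alpha_t(D_0)$ pairwise disjoint and all disjoint from $D_0$; such a family is easily built by conjugating $D_0$ to the standard embedded ball and composing with a normal translation supported in a large fixed ball. For $t \neq s$, since $\alpha_t(D_0) \cap \alpha_s(D_0) = \emptyset$, one has $\alpha_s^{-1} \alpha_t(D_0) \cap D_0 = \emptyset$, and the intermediate step applied to $\alpha_s^{-1}\alpha_t$ yields $\varphi(\alpha_t)(C) \cap \varphi(\alpha_s)(C) = \emptyset$. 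Since each $\varphi(\alpha_t)(C)$ is the image of the open interval $C$ under a homeomorphism of $\mathbb{R}$, hence itself a non-empty open interval, the family produces uncountably many pairwise disjoint non-empty open intervals in $\mathbb{R}$, contradicting second countability. I expect the only delicate part of the proof to be the case analysis of the intermediate step, in particular covering the unbounded subcases of $C$ and invoking the inverse trick to dispose of the configuration $C' \supsetneq C$; the construction of the uncountable family and the final contradiction are routine.
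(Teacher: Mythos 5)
Your proposal is correct and follows essentially the same strategy as the paper: assume every $F_{D}$ with $D\cap D_{0}=\emptyset$ misses $C$, use the fact that the endpoints of the translated intervals land in sets $F_{D'}$ disjoint from $C$ and from $F_{D_{0}}$ to show that an uncountable family of images $\varphi(\alpha_{t})(C)$ is pairwise disjoint, and contradict second countability. Your isolation of the intermediate step ($\varphi(\alpha)(C)\cap C=\emptyset$ whenever $\alpha(D_{0})\cap D_{0}=\emptyset$, with the nesting case killed by passing to $\alpha^{-1}$) is only a minor repackaging of the paper's argument with the family $D_{x}=e(B^{d-1}\times\{x\})$ and the homeomorphisms $h_{0,x}$.
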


\begin{proof}
Let $(a_{1},a_{2})$ be a connected component of the complement of $F_{D_{0}}$. It is possible that either $a_{1}= -\infty$ or $a_{2}=+ \infty$. Consider a homeomorphism $e: \mathbb{R}^{d-1} \times \mathbb{R} \rightarrow \mathbb{R}^{d}$ such that $e( B^{d-1} \times \left\{0 \right\})=D_{0}$, where $B^{d-1}$ denotes the unit closed ball in $\mathbb{R}^{d-1}$. For any real number $x$, let $D_{x}=e( B^{d-1} \times \left\{ x \right\})$. Given two real $x \neq y$, take a homeomorphism $\eta_{x,y}$ in $\mathrm{Homeo}_{0}(\mathbb{R})$ which sends the point $x$ to the point $y$. Consider a homeomorphism $h_{x,y}$ such that the following property is satisfied. The restriction of $eh_{x,y}e^{-1}$ to $ B^{d-1} \times \mathbb{R}$ is equal to the map:
$$ \begin{array}{rcl}
 B^{d-1} \times \mathbb{R}  & \rightarrow & \mathbb{R}^{d-1} \times \mathbb{R} \\
(p,z) & \mapsto &(p,\eta_{x,y}(z))
\end{array}
$$
Notice that, for any real numbers $x$ and $y$, $h_{x,y}(D_{x})=D_{y}$

Let us prove by contradiction that there exists a real number $x \neq 0$ such that $F_{D_{x}} \cap (a_{1},a_{2}) \neq \emptyset$. Suppose that, for any such embedded ball $D_{x}$, $F_{D_{x}} \cap (a_{1},a_{2})= \emptyset$. We claim that the open sets $\varphi(h_{0,x})((a_{1},a_{2}))$ are pairwise disjoint. It is not possible as there would be uncountably many pairwise disjoint open intervals in $\mathbb{R}$. 

Indeed, suppose by contradiction that there exists real numbers $x \neq y$ such that $\varphi(h_{0,x})((a_{1},a_{2})) \cap \varphi(h_{0,y})((a_{1},a_{2})) \neq \emptyset$. Notice that the homeomorphism $h_{0,x}^{-1} h_{0,y}$ and $h_{0,y}^{-1} h_{0,x}$ send respectively the set $D_{0}$ to sets of the form $D_{z}$ and $D_{z'}$, where $z,z' \in \mathbb{R}$. Hence, for $i=1,2$, the homeomorphisms $\varphi(h_{0,x}^{-1} h_{0,y})$ (respectively $\varphi(h_{0,y}^{-1} h_{0,x})$) sends the point $a_{i} \in F_{D_{0}}$ to a point in $F_{D_{z}}$ (respectively in $F_{D_{z'}}$). By hypothesis, these points do not belong to $(a_{1},a_{2})$. Therefore $$\varphi(h_{0,y}^{-1} h_{0,x})(a_{1},a_{2})=(a_{1},a_{2})$$
or
$$\varphi(h_{0,x})(a_{1},a_{2})=\varphi(h_{0,y})(a_{1},a_{2}).$$
But this last equality cannot hold as the real endpoints of the interval on the left-hand side belong to $F_{D_{x}}$ and the real endpoints point of the interval on the right-hand side belongs to $F_{D_{y}}$. Moreover, we saw that these two closed sets were disjoint, a contradiction.
\end{proof}

\begin{lemma}
Each set $F_{D}$ contains only one point.
\end{lemma}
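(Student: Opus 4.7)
The plan is to argue by contradiction: suppose $F_{D_0}$ contains at least two points. Taking the parameterized family $\{D_x\}_{x\in\mathbb{R}}$ of embedded $(d-1)$-balls from the proof of Lemma~\ref{compl}, the sets $F_{D_x}=\varphi(h_{0,x})(F_{D_0})$ form an uncountable family of pairwise disjoint closed subsets of $\mathbb{R}$, each carried onto any other by an orientation-preserving self-homeomorphism of $\mathbb{R}$ (orientation preservation follows from the simplicity of $\mathrm{Homeo}_0(\mathbb{R}^d)$, which rules out any morphism to $\mathbb{Z}/2\mathbb{Z}$). In particular every $F_{D_x}$ has at least two points.

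First I would dispatch the case where $F_{D_0}$ is connected in $\mathbb{R}$. The possibility $F_{D_0}=\mathbb{R}$ is excluded: it would force $\varphi|_{H^d_{D_0}}$ to be trivial, and as the conjugates of $H^d_{D_0}$ generate $\mathrm{Homeo}_0(\mathbb{R}^d)$ (Lemma~\ref{fragball}), $\varphi$ itself would be trivial, contradicting the assumption that no point of $\mathbb{R}$ is globally fixed. If $F_{D_0}$ is a nontrivial bounded closed interval, each $F_{D_x}$ is also a bounded closed interval of positive length; as these intervals are pairwise disjoint, each contains a distinct rational, giving an impossible injection $\mathbb{R}\hookrightarrow\mathbb{Q}$. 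If $F_{D_0}$ is an unbounded half-line, each $F_{D_x}$ is a half-line too, and $\mathbb{R}$ admits at most two pairwise disjoint half-lines, again contradicting the uncountability of the family.

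In the remaining case, $F_{D_0}$ is disconnected and admits a bounded connected component $(a_1,a_2)$ of its complement, with $a_1,a_2\in F_{D_0}$. Set $J_x:=\varphi(h_{0,x})((a_1,a_2))$; each $J_x$ is a bounded open interval, a connected component of $\mathbb{R}\setminus F_{D_x}$, whose endpoints lie in $F_{D_x}$. Since the $F_{D_x}$'s are pairwise disjoint, the endpoints of $J_x$ and $J_y$ are all distinct for $x\neq y$. If I can show the $J_x$'s are pairwise disjoint, then uncountably many disjoint nonempty open intervals in $\mathbb{R}$ contradicts second countability, completing the proof.

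The main obstacle is therefore proving $J_x\cap J_y=\emptyset$ for $x\neq y$. I would mimic the endpoint analysis in the proof of Lemma~\ref{compl}: assuming $J_x\cap J_y\neq\emptyset$, study the homeomorphism $g:=\varphi(h_{0,y}^{-1}h_{0,x})$, so that $g((a_1,a_2))$ is an open interval whose endpoints lie in some $F_{D_z}$. Overlap forces either $J_x=J_y$ (impossible, by disjointness of $F_{D_x}$ and $F_{D_y}$), one-sided containment, or a crossing configuration. The subtlety absent from Lemma~\ref{compl} is that we can no longer assume $F_{D_z}$ misses $(a_1,a_2)$, and this is precisely when the crossing configuration can arise. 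To overcome this I plan to iterate Lemma~\ref{compl} on a complementary component of $F_{D_z}$ meeting $(a_1,a_2)$, producing a nested chain of intervals whose endpoints lie in ever-new pairwise disjoint $F_{D_{z_k}}$'s, and to extract from this chain an uncountable pairwise disjoint subfamily of open intervals, yielding the desired contradiction.
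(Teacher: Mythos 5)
Your reduction to the case of a bounded complementary gap $(a_{1},a_{2})$ with endpoints in $F_{D_{0}}$ is fine, and your Case 1 is handled correctly, but the decisive step --- pairwise disjointness of the intervals $J_{x}=\varphi(h_{0,x})((a_{1},a_{2}))$ --- is exactly where the argument breaks down, and the workaround you sketch does not repair it. The endpoint analysis in the proof of Lemma~\ref{compl} works there only because of the contradiction hypothesis in force in that proof, namely that every $F_{D_{z}}$ misses $(a_{1},a_{2})$; that is what forces $g((a_{1},a_{2}))$ to contain $(a_{1},a_{2})$ whenever it meets it. In your setting, Lemma~\ref{compl} itself guarantees that some $F_{D_{z}}$ \emph{does} meet $(a_{1},a_{2})$, so $g(a_{1})$ and $g(a_{2})$ may land inside the gap, and both the strict-containment and the crossing configurations are live possibilities. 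The proposed fix --- iterating Lemma~\ref{compl} to build a nested chain and then ``extracting an uncountable pairwise disjoint subfamily'' --- is not an argument: a countable iteration produces only countably many intervals, nested intervals are the opposite of disjoint, and an uncountable nested chain of open intervals with pairwise distinct endpoints does exist in $\mathbb{R}$ (for instance $(-t,t)$ for $0<t<1$), so nesting alone yields no contradiction with second countability. As written, the proof is incomplete at its main step.

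For comparison, the paper avoids counting altogether at this stage. Given $p_{1}<p_{2}$ in $F_{D}$, it uses Lemma~\ref{compl} to find $D'$ disjoint from $D$ with $F_{D'}\cap(p_{1},p_{2})\neq\emptyset$, picks $r<p_{1}$, and observes that for $g_{1},g_{3}\in H^{d}_{D}$ and $g_{2}\in H^{d}_{D'}$ one has $\varphi(g_{1})\varphi(g_{2})\varphi(g_{3})(r)<p_{2}$: each factor is an orientation-preserving homeomorphism fixing, respectively, $p_{1}$, a point of $F_{D'}\cap(p_{1},p_{2})$, and $p_{2}$, so it cannot push the point past that fixed point. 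By the fragmentation Lemma~\ref{fragball}, the entire orbit of $r$ under $\varphi(\mathrm{Homeo}_{0}(\mathbb{R}^{d}))$ is then contained in $(-\infty,p_{2}]$, and its supremum is a global fixed point, contradicting the standing hypothesis of Proposition~\ref{auplusunpoint}. This orbit-bounding idea is what is missing from your proposal.
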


\begin{proof}
Suppose that there exists an embedded $(d-1)$-dimensional ball $D$ such that the set $F_{D}$ contains two points $p_{1}<p_{2}$. By Lemma \ref{compl}, there exists an embedded $(d-1)$-dimensional ball $D'$ disjoint from $D$ such that the set $F_{D'}$ has a common point with the open interval $(p_{1},p_{2})$. Take a real number $r<p_{1}$. Then, for any homeomorphisms $g_{1}$ in $G_{D}$, $g_{2}$ in $G_{D'}$ and $g_{3}$ in $G_{D}$,
$$ \varphi(g_{1}) \circ \varphi(g_{2}) \circ \varphi(g_{3})(r) < p_{2}.$$
By Lemma \ref{fragball}, this implies that the following inclusion holds:
$$ \left\{ \varphi(g)(r), g \in \mathrm{Homeo}_{0}(\mathbb{R}^{d}) \right\} \subset (-\infty, p_{2}].$$
The supremum of the left-hand set provides a fixed point for the action $\varphi$, a contradiction.
\end{proof}

\section{Proof of Proposition \ref{aumoinsunpoint}}

This proof uses the following lemmas. For a subgroup $G$ of $\mathrm{Homeo}_{0}(\mathbb{R}^{d})$, we define the support $ \mathrm{Supp}(G)$ of $G$ as the closure of the set:
$$ \left\{ x \in \mathbb{R}^{d}, \ \exists g \in G, \ gx \neq x \right\}.$$
Let $\mathrm{Homeo}_{\mathbb{Z}}(\mathbb{R})= \left\{ f \in \mathrm{Homeo}(\mathbb{R}), \ \forall x \in \mathbb{R}, f(x+1)=f(x)+1 \right\}.$

To prove Proposition \ref{aumoinsunpoint}, we need the following lemmas.

\begin{lemma} \label{commhomeoz}
Let $G$ and $G'$ be subgroups of the group $\mathrm{Homeo}_{+}(\mathbb{R})$ of orientation-preserving homeomorphisms of the circle. Suppose that the following conditions are satisfied.
\begin{enumerate}
\item The groups $G$ and $G'$ are isomorphic to the group $\mathrm{Homeo}_{\mathbb{Z}}(\mathbb{R})$.
\item The subgroups $G$ and $G'$ of $\mathrm{Homeo}_{+}(\mathbb{R})$ commute: $\forall g \in G, g' \in G', \ gg'=g'g.$
\end{enumerate}
Then $\mathrm{Supp}(G) \subset \mathrm{Fix}(G')$ and $\mathrm{Supp}(G') \subset \mathrm{Fix}(G)$.
\end{lemma}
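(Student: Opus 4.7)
The plan is to argue by contradiction: suppose there exist $x \in \mathrm{Supp}(G)$ and $g' \in G'$ with $g'(x) \neq x$ (the symmetric inclusion $\mathrm{Supp}(G') \subset \mathrm{Fix}(G)$ will follow by exchanging the roles of $G$ and $G'$). Since $G$ commutes with $g'$, the closed set $\mathrm{Fix}(g')$ is $G$-invariant, and $G$ permutes the connected components of $\mathbb{R} \setminus \mathrm{Fix}(g')$. Let $J$ be the component containing $x$ and let $G_J \subset G$ denote its stabilizer; the hypothesis $x \in \mathrm{Supp}(G)$ will ultimately force $G_J$ to act non-trivially on $J$ (the case in which elements of $G$ permute $J$ with another component being handled by the same method applied to those components).

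On $J$, the restriction $g'|_J$ is a fixed-point-free orientation-preserving homeomorphism of $J \simeq \mathbb{R}$, hence conjugate to the unit translation $T_1$, and its centralizer $C := C_{\mathrm{Homeo}_+(J)}(g'|_J)$ is therefore isomorphic to $\mathrm{Homeo}_{\mathbb{Z}}(\mathbb{R})$. The restriction map $G_J \to \mathrm{Homeo}_+(J)$ lands in $C$, giving a homomorphism from $G_J \subseteq G \cong \mathrm{Homeo}_{\mathbb{Z}}(\mathbb{R})$ into $C \cong \mathrm{Homeo}_{\mathbb{Z}}(\mathbb{R})$. Using the central extension $1 \to \mathbb{Z} \to \mathrm{Homeo}_{\mathbb{Z}}(\mathbb{R}) \to \mathrm{Homeo}_+(\mathbb{S}^1) \to 1$ together with the fact that the quotient is simple and perfect, I would classify the normal subgroups of $\mathrm{Homeo}_{\mathbb{Z}}(\mathbb{R})$ and show that the only torsion-free quotients that embed in $\mathrm{Homeo}_+(\mathbb{R})$ are the trivial group and $\mathrm{Homeo}_{\mathbb{Z}}(\mathbb{R})$ itself: a cyclic quotient isomorphic to $\mathbb{Z}$ would force a splitting $\mathrm{Homeo}_{\mathbb{Z}}(\mathbb{R}) \cong \mathrm{Homeo}_+(\mathbb{S}^1) \times \mathbb{Z}$, incompatible with the torsion-freeness of $\mathrm{Homeo}_{\mathbb{Z}}(\mathbb{R})$ and the presence of finite-order rotations in $\mathrm{Homeo}_+(\mathbb{S}^1)$, while the other nontrivial quotients themselves contain torsion and so cannot embed in $\mathrm{Homeo}_+(\mathbb{R})$.

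The main obstacle is to rule out the remaining possibility in which the image $G_J|_J$ is a full copy of $\mathrm{Homeo}_{\mathbb{Z}}(\mathbb{R})$ sitting inside $C$. Here I would apply the same analysis symmetrically to $G'$, whose central generator plays, via its action on $J$, a role analogous to $g'|_J$. Reconciling the position of the central element of $G_J|_J$ inside $C$ with the analogous data obtained from the $G'$-side should identify a central element of $G$ with an element of $G'$ inside $\mathrm{Homeo}_+(\mathbb{R})$, producing a non-trivial element of $G \cap G'$ whose existence is incompatible with $G$ and $G'$ being distinct commuting copies of the non-abelian group $\mathrm{Homeo}_{\mathbb{Z}}(\mathbb{R})$. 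Carrying out this compatibility step between the two centers is the technical heart of the proof.
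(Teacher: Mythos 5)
Your opening moves are sound and broadly parallel to the paper's: the paper also works with fixed-point-free elements coming from the other group (it uses the central generators $\alpha$ of $G$ and $\alpha'$ of $G'$ and the sets $A_{\alpha}=\mathbb{R}-\mathrm{Fix}(\alpha)$, $A_{\alpha'}=\mathbb{R}-\mathrm{Fix}(\alpha')$), and your observation that the centralizer of a fixed-point-free element of $\mathrm{Homeo}_{+}(J)$ is a copy of $\mathrm{Homeo}_{\mathbb{Z}}(\mathbb{R})$ is correct. But the proposal has a genuine gap exactly where you flag ``the technical heart'': the case where $G_{J}|_{J}$ is a full copy of $\mathrm{Homeo}_{\mathbb{Z}}(\mathbb{R})$ is not handled, and the contradiction you aim for there does not exist as stated. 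A nontrivial element of $G\cap G'$ is \emph{not} incompatible with $G$ and $G'$ being commuting copies of $\mathrm{Homeo}_{\mathbb{Z}}(\mathbb{R})$: since the two groups commute elementwise, $G\cap G'$ is automatically central in both, the centers are infinite cyclic, and a relation $\alpha^{k}=\alpha'^{m}$ is not absurd on purely group-theoretic grounds. What actually kills this configuration in the paper is a dynamical computation plus perfectness: if $G$ acts on an interval $I$ as the standard (conjugated) $\mathrm{Homeo}_{\mathbb{Z}}(\mathbb{R})$-action, then anything in $\mathrm{Homeo}_{+}(I)$ commuting with $G|_{I}$ is a power of $\alpha|_{I}$, so every $\beta'\in G'$ satisfies $\beta'|_{I}=\alpha^{k(\beta')}|_{I}$; the map $k:G'\to\mathbb{Z}$ is a morphism, and Lemma \ref{perfect} (perfectness of $\mathrm{Homeo}_{\mathbb{Z}}(\mathbb{R})$, via \cite{EHN}) forces it to be trivial, so $G'$ acts trivially on $I$ after all. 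Your sketch contains neither the centralizer computation nor the perfectness input, and without them the argument does not close.

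There is a second, smaller gap: your classification of quotients applies to $\mathrm{Homeo}_{\mathbb{Z}}(\mathbb{R})$, but you apply it to the stabilizer $G_{J}$, which a priori need not be all of $G$ (in the standard action of $\mathrm{Homeo}_{\mathbb{Z}}(\mathbb{R})$ on $\mathbb{R}$, stabilizers of proper subsets are proper subgroups). The parenthetical remark that the permuted-component case is ``handled by the same method'' is not a proof. To show $G_{J}=G$ one essentially needs the structure theorem for actions of $\mathrm{Homeo}_{\mathbb{Z}}(\mathbb{R})$ on the line (the paper's Lemma \ref{homeozsurr}): on each component of $\mathbb{R}-\mathrm{Fix}(G)$ the action is standard, hence minimal, so the closed $G$-invariant set $\mathrm{Fix}(g')$ meets such a component either in the empty set or in the whole component, and invariance of $J$ follows. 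The paper also needs Matsumoto's transitivity theorem to exclude a strict nesting of a component of $A_{\alpha}$ inside a component of $A_{\alpha'}$; your centralizer set-up may sidestep that particular configuration, but since the decisive case is left unproven it is impossible to tell. On the positive side, your derivation that $\mathrm{Homeo}_{\mathbb{Z}}(\mathbb{R})$ has no quotient isomorphic to $\mathbb{Z}$ (a splitting $\mathrm{Homeo}_{+}(\mathbb{S}^{1})\times\mathbb{Z}$ would contradict torsion-freeness) is a valid and slightly more self-contained substitute for the citation of perfectness, but it is used in the wrong place: the place where perfectness is really needed is the centralizer step you left open.
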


\begin{lemma} \label{isomhomeoz}
Take any nonempty open subset $U$ of $\mathbb{R}^{d}$. Then there exists a subgroup of $\mathrm{Homeo}_{0}(\mathbb{R}^{d})$ isomorphic to $\mathrm{Homeo}_{\mathbb{Z}}(\mathbb{R})$ which is supported in $U$.
\end{lemma}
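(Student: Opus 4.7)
The plan is to realize $\mathrm{Homeo}_{\mathbb{Z}}(\mathbb{R})$ first as a group of self-homeomorphisms of a compact interval fixing its endpoints, and then to extend those radially over a small closed ball contained in $U$. Since $U$ is open and nonempty, I first pick $x_{0}\in U$ and $R>0$ with $\overline{B(x_{0},R)}\subset U$, together with a homeomorphism $\rho:\mathbb{R}\to(0,R)$ (for instance an affine rescaling of $\tanh$).

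Next I would build an injective morphism $\Phi_{0}:\mathrm{Homeo}_{\mathbb{Z}}(\mathbb{R})\to H$, where $H$ denotes the group of increasing self-homeomorphisms of $[0,R]$ that fix $0$ and $R$, by setting $\Phi_{0}(f)=\rho\circ f\circ\rho^{-1}$ on $(0,R)$. The key point is that by the defining identity $f(x+1)=f(x)+1$ the displacement $x\mapsto f(x)-x$ is $1$-periodic, hence bounded on $\mathbb{R}$. Consequently $f(y)\to\pm\infty$ as $y\to\pm\infty$, which makes $\rho\circ f\circ\rho^{-1}$ extend continuously to the endpoints by $0\mapsto 0$ and $R\mapsto R$. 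Conjugation is a homomorphism and $\rho$ is bijective, so $\Phi_{0}$ is an injective group morphism.

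Then I would realize $H$ inside $\mathrm{Homeo}_{0}(\mathbb{R}^{d})$ by the radial extension $\Psi$ defined, for $g\in H$, by
$$ \Psi(g)(x)=x_{0}+g(|x-x_{0}|)\,\frac{x-x_{0}}{|x-x_{0}|} \quad \text{if } 0<|x-x_{0}|\leq R, $$
together with $\Psi(g)(x_{0})=x_{0}$ and $\Psi(g)(x)=x$ when $|x-x_{0}|>R$. Continuity at $x_{0}$ will follow from $g(0)=0$ and at $\partial B(x_{0},R)$ from $g(R)=R$. The relation $|\Psi(g)(x)-x_{0}|=g(|x-x_{0}|)$ yields $\Psi(g_{1}\circ g_{2})=\Psi(g_{1})\circ\Psi(g_{2})$ directly, and injectivity is clear. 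Each $\Psi(g)$ is supported in $\overline{B(x_{0},R)}$, and the Alexander trick provides a compactly supported isotopy to the identity, so $\Psi(g)\in\mathrm{Homeo}_{0}(\mathbb{R}^{d})$.

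Composing $\Psi\circ\Phi_{0}$ then produces a subgroup of $\mathrm{Homeo}_{0}(\mathbb{R}^{d})$ isomorphic to $\mathrm{Homeo}_{\mathbb{Z}}(\mathbb{R})$ and supported in $\overline{B(x_{0},R)}\subset U$, as required. The only moderately subtle point is the continuity of $\rho\circ f\circ\rho^{-1}$ at the endpoints of $[0,R]$, and that reduces immediately to the periodicity of the displacement of elements of $\mathrm{Homeo}_{\mathbb{Z}}(\mathbb{R})$; I do not foresee a genuine obstacle.
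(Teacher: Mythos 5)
Your proof is correct and follows essentially the same strategy as the paper: compress $\mathbb{R}$ onto a bounded interval by a conjugating homeomorphism and then extend to a homeomorphism of $\mathbb{R}^{d}$ supported in a closed ball inside $U$ (the paper extends chord-by-chord in the first coordinate rather than radially, but this is only a cosmetic difference). Note that your appeal to the $1$-periodicity of the displacement is more than you need: any orientation-preserving homeomorphism $f$ of $\mathbb{R}$ satisfies $f(y)\to\pm\infty$ as $y\to\pm\infty$, so $\rho\circ f\circ\rho^{-1}$ always extends by fixing the endpoints.
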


Lemma \ref{commhomeoz} will be proved in the next section. We now provide a proof of Lemma \ref{isomhomeoz}.

\begin{proof}[Proof of Lemma \ref{isomhomeoz}]
Take a closed ball $B$ contained in $U$. Changing coordinates if necessary, we can suppose that $B$ is the unit closed ball in $\mathbb{R}^{d}$. Take an orientation-preserving homeomorphism $h: \mathbb{R} \rightarrow (-1,1)$. For any orientation-preserving homeomorphism $f: \mathbb{R} \rightarrow \mathbb{R}$, we define the homeomorphism $\lambda_{h}(f): \mathbb{R}^{d} \rightarrow \mathbb{R}^{d}$ in the following way.
\begin{enumerate}
\item The homeomorphism $\lambda_{h}(f)$ is equal to the identity outside the interior of the ball $B$.
\item For any $(x_{1},x') \in \mathbb{R} \times \mathbb{R}^{d-1} \cap \mathrm{int}(B)$:
$$ \lambda_{h}(f)(x_{1},x')=(\sqrt{1- \left\| x' \right\|^{2}} h \circ f \circ h^{-1}(\frac{x_{1}}{\sqrt{1- \left\| x' \right\|^{2}}}),x').$$
\end{enumerate}
The map $\lambda_{h}$ defines an embedding of the group $\mathrm{Homeo}_{+}(\mathbb{R})$ into the group $\mathrm{Homeo}_{0}(\mathbb{R}^{d})$. The image under $\lambda_{h}$ of the group $\mathrm{Homeo}_{\mathbb{Z}}(\mathbb{R})$ is a subgroup of $\mathrm{Homeo}_{0}(\mathbb{R}^{d})$ which is supported in $U$.
\end{proof}

Let us complete now the proof of Proposition \ref{aumoinsunpoint}.

\begin{proof}[Proof of Proposition \ref{aumoinsunpoint}]
Fix a point $p$ in $\mathbb{R}^{d}$. Take a closed ball $B \subset \mathbb{R}^{d}$ which is centered at $p$. Let $G^{d}_{B}$ be the subgroup of $G^{d}_{p}$ consisting of homeomorphisms which pointwise fix a neighbourhood of the ball $B$. Let us prove that $\mathrm{Fix}(G_{B}^{d}) \neq \emptyset$.

Take a subgroup $G_{1}$ of $\mathrm{Homeo}_{0}(\mathbb{R}^{d})$ which is isomorphic to $\mathrm{Homeo}_{\mathbb{Z}}(\mathbb{R})$ and supported in $B$. Such a subgroup exists by Lemma \ref{isomhomeoz}. This subgroup commutes with any subgroup $G_{2}$ of $\mathrm{Homeo}_{0}(\mathbb{R}^{d})$ which is isomorphic to $\mathrm{Homeo}_{\mathbb{Z}}(\mathbb{R})$ and supported outside $B$.

If the group $\varphi(\mathrm{Homeo}_{0}(\mathbb{R}^{d}))$ admits a fixed point, there is nothing to prove. Suppose that this group has no fixed point. As the group $\mathrm{Homeo}_{0}(\mathbb{R}^{d})$ is simple, the morphism $\varphi$ is one-to-one. Moreover, any morphism $\mathrm{Homeo}_{0}(\mathbb{R}^{d}) \rightarrow \mathbb{Z} / 2 \mathbb{Z}$ is trivial: the morphism $\varphi$ takes values in $\mathrm{Homeo}_{+}(\mathbb{R})$. Hence the subgroups $\varphi(G_{1})$ and $\varphi(G_{2})$ of $\mathrm{Homeo}(\mathbb{R})$ satisfy the hypothesis of Lemma \ref{commhomeoz}. By this lemma:
$$ \emptyset \neq \mathrm{Supp}(\varphi(G_{1})) \subset \mathrm{Fix}(\varphi(G_{2})).$$
We claim that the group $G^{d}_{B}$ is generated by the union of its subgroups isomorphic to $\mathrm{Homeo}_{\mathbb{Z}}(\mathbb{R})$. This claim implies that
$$ \emptyset \neq \mathrm{Supp}(\varphi(G_{1})) \subset \mathrm{Fix}(\varphi(G^{d}_{B})).$$
For $d \geq 2$, the claim is a direct consequence of the simplicity of the group $G^{d}_{B}$. In the case where $d=1$, denote by $[a,b]$ the compact interval $B$. The inclusions of the groups $\mathrm{Homeo}_{0}((-\infty,a))$ and $\mathrm{Homeo}_{0}((b,+\infty))$ induce an isomorphism $\mathrm{Homeo}_{0}((-\infty,a)) \times \mathrm{Homeo}_{0}((b,+\infty)) \rightarrow G^{d}_{B}$. The simplicity of each factor of this decomposition implies the claim.

Now, let us prove that the set $\mathrm{Fix}(\varphi(G^{d}_{B}))$ is compact. Suppose by contradiction that there exists a sequence $(a_{k})_{k \in \mathbb{N}}$ of real numbers in $\mathrm{Fix}(\varphi(G^{d}_{B}))$ which tends to $+\infty$ (if we suppose that it tends to $-\infty$, we obtain of course an analogous contradiction). Let us choose a closed ball $B' \subset \mathbb{R}^{d}$ which is disjoint from $B$. Observe that the subgroups $G^{d}_{B}$ and $G^{d}_{B'}$ are conjugate in $\mathrm{Homeo}_{0}(\mathbb{R}^{d})$ by a homeomorphism which sends the ball $B$ to the ball $B'$. Then the subgroups $\varphi(G^{d}_{B})$ and $\varphi(G^{d}_{B'})$ are conjugate in the group $\mathrm{Homeo}_{+}(\mathbb{R})$. Hence the sets $\mathrm{Fix}(\varphi(G^{d}_{B}))$ and $\mathrm{Fix}(\varphi(G^{d}_{B'}))$ are homeomorphic: there exists a sequence $(b_{k})_{k \in \mathbb{N}}$ of elements in $\mathrm{Fix}(\varphi(G^{d}_{B'}))$ which tends to $+\infty$. Take positive integers $n_{1}$, $n_{2}$ and $n_{3}$ such that $a_{n_{1}} < b_{n_{2}} < a_{n_{3}}$. Fix $x_{0}<a_{n_{1}}$. We notice then that for any homeomorphisms $g_{1} \in G^{d}_{B}$, $g_{2} \in G^{d}_{B'}$ and $g_{3} \in G^{d}_{B}$, the following inequality is satisfied:
$$\varphi(g_{1}) \varphi(g_{2}) \varphi(g_{3})(x_{0})<a_{n_{3}}.$$
However, any element $g$ in $\mathrm{Homeo}_{0}(\mathbb{R}^{d})$ can be written as a product
$$ g=g_{1}g_{2}g_{3},$$
where $g_{1}$ and $g_{3}$ belong to $G^{d}_{B}$ and $g_{2}$ belongs to $G^{d}_{B'}$. The proof of this fact is similar to that of Lemma \ref{fragpoint}. Therefore:
$$ \overline{\left\{ \varphi(g)(x_{0}), \ g \in \mathrm{Homeo}_{c}(\mathbb{R}) \right\}} \subset (-\infty,a_{n_{3}}].$$
The greatest element of the left-hand set is a fixed point of the image of $\varphi$: this is not possible as this image was supposed to have no fixed point.

Observe that the group $\varphi(G^{d}_{p})$ is the union of its subgroup of the form $\varphi(G^{d}_{B'})$, with $B'$ varying over the set $\mathcal{B}_{p}$ of closed balls centered at the point $p$. By compactness, the set
$$\mathrm{Fix}(\varphi(G^{d}_{p}))= \bigcap_{B' \in \mathcal{B}_{p}} \mathrm{Fix}(G^{d}_{B'})$$
is nonempty. Proposition \ref{aumoinsunpoint} is proved.
\end{proof}

\section{Proof of Lemma \ref{commhomeoz}}

We need the following lemmas. The first one will be proved afterwards.

\begin{lemma} \label{homeozsurr}
Let $\psi: \mathrm{Homeo}_{\mathbb{Z}}(\mathbb{R}) \rightarrow \mathrm{Homeo}_{+}(\mathbb{R})$ be a group morphism. Then there exists a closed set $F \subset \mathbb{R}$ such that:
\begin{enumerate}
\item The set $F$ is pointwise fixed under any element in $\mathrm{Homeo}_{\mathbb{Z}}(\mathbb{R})$.
\item For any connected component $K$ of the complement of $F$, there exists a homeomorphism $h_{K}: \mathbb{R} \rightarrow K$ such that:
$$ \forall f \in \mathrm{Homeo}_{\mathbb{Z}}(\mathbb{R}), \forall x \in K, \ \psi(f)(x)=h_{K}fh_{K}^{-1}.$$
\end{enumerate} 
\end{lemma}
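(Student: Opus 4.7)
The plan is to follow the pattern of the proof of Theorem \ref{roncirc}, but to exploit the center of $\mathrm{Homeo}_{\mathbb{Z}}(\mathbb{R})$, generated by the translation $T_{1}: x \mapsto x+1$, and reduce to Matsumoto's theorem on the circle (as used in the Remark of the Introduction). Set $F := \mathrm{Fix}(\psi(\mathrm{Homeo}_{\mathbb{Z}}(\mathbb{R})))$, which is closed; since every element of $\psi(\mathrm{Homeo}_{\mathbb{Z}}(\mathbb{R}))$ is orientation-preserving, it preserves each connected component of $\mathbb{R} \setminus F$. It therefore suffices to prove: if $\psi$ has no global fixed point, then $\psi$ is conjugate in $\mathrm{Homeo}_{+}(\mathbb{R})$ to the standard inclusion $\mathrm{Homeo}_{\mathbb{Z}}(\mathbb{R}) \hookrightarrow \mathrm{Homeo}_{+}(\mathbb{R})$ by some homeomorphism $h: \mathbb{R} \to \mathbb{R}$. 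A preliminary normal subgroup analysis---using that $\mathrm{Homeo}_{0}(\mathbb{S}^{1}) \cong \mathrm{Homeo}_{\mathbb{Z}}(\mathbb{R})/\langle T_{1}\rangle$ is simple and contains torsion, while $\mathrm{Homeo}_{+}(\mathbb{R})$ does not---shows that under this hypothesis $\psi$ must be injective and $T := \psi(T_{1}) \neq \mathrm{id}$.

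The main case is when $T$ acts freely on $\mathbb{R}$. Then $T$ is topologically conjugate to a translation by some $\eta \in \mathrm{Homeo}_{+}(\mathbb{R})$; replacing $\psi$ by $\eta^{-1}\psi(\cdot)\eta$ I may assume $T = T_{1}$. Every element of $\psi(\mathrm{Homeo}_{\mathbb{Z}}(\mathbb{R}))$ commutes with $T_{1}$ and thus belongs to $\mathrm{Homeo}_{\mathbb{Z}}(\mathbb{R})$, so $\psi$ descends to a morphism $\bar{\psi} : \mathrm{Homeo}_{0}(\mathbb{S}^{1}) \to \mathrm{Homeo}_{0}(\mathbb{S}^{1})$, which by Matsumoto's theorem is conjugation by some $\bar{h} \in \mathrm{Homeo}_{0}(\mathbb{S}^{1})$. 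Lifting $\bar{h}$ to $h \in \mathrm{Homeo}_{\mathbb{Z}}(\mathbb{R})$, for each $f$ the element $\psi(f) \cdot (h f h^{-1})^{-1}$ projects to the identity on $\mathbb{S}^{1}$, hence equals $T_{1}^{n(f)}$ for some integer $n(f)$. The map $n: \mathrm{Homeo}_{\mathbb{Z}}(\mathbb{R}) \to \mathbb{Z}$ is a group homomorphism, and $n(T_{1}) = 0$, so $n$ factors through the simple group $\mathrm{Homeo}_{0}(\mathbb{S}^{1})$ and vanishes. Therefore $\psi(f) = h f h^{-1}$ for every $f$, as desired.

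The remaining case---$T \neq \mathrm{id}$ but $\mathrm{Fix}(T) \neq \emptyset$---must be excluded. The set $\mathrm{Fix}(T)$ is closed, proper, and $\psi$-invariant; a supremum/infimum argument rules out any finite extremum (otherwise $\sup \mathrm{Fix}(T)$ would be a global $\psi$-fixed point), so the complementary components form a family $\{I_{j}\}$ of bounded open intervals permuted by $\psi$. If this family were finite of cardinality $n$, one would obtain a morphism $\mathrm{Homeo}_{\mathbb{Z}}(\mathbb{R}) \to S_{n}$, necessarily trivial since $\mathrm{Homeo}_{\mathbb{Z}}(\mathbb{R})$ has no proper finite-index normal subgroup; each $I_{j}$ would then be preserved setwise, and its endpoints would be globally $\psi$-fixed, a contradiction. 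The case of infinitely many components is the main obstacle of the proof: one must analyze the induced action of the stabilizer $\mathrm{Stab}(I_{j})$ on the circle $I_{j}/\langle T\rangle$ (again via Matsumoto's theorem) and extract a global fixed point of $\psi$. This analysis must be carried out by direct means, since we cannot appeal to Proposition \ref{aumoinsunpoint}---its proof relies, via Lemma \ref{commhomeoz}, on the very lemma we are proving here.
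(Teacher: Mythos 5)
Your reduction and your ``main case'' are sound and essentially reproduce the paper's Claim 2: once $T=\psi(T_{1})$ acts freely you conjugate it to the unit translation, observe that the image centralizes $T_{1}$ and hence lies in $\mathrm{Homeo}_{\mathbb{Z}}(\mathbb{R})$, descend to the circle, apply Matsumoto's theorem, and kill the resulting integer-valued cocycle $n(f)$ by perfectness/simplicity (the paper uses Lemma \ref{perfect} here, which is equivalent to your factoring argument). The problem is the remaining case, $T\neq\mathrm{id}$ with $\mathrm{Fix}(T)\neq\emptyset$: you correctly identify it as the main obstacle, dispose of the easy subcase of finitely many complementary intervals, and then explicitly leave the infinite subcase as something that ``must be carried out by direct means.'' That is a genuine gap, not a detail: without it you cannot conclude that on each component of the complement of $F=\mathrm{Fix}(\psi(\mathrm{Homeo}_{\mathbb{Z}}(\mathbb{R})))$ the generator of the center acts freely, which is exactly what the standard model requires. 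Moreover, the route you sketch for it (the stabilizer of a single gap $I_{j}$ acting on $I_{j}/\langle T\rangle$) is not obviously workable: the stabilizer of one gap need not be all of $\psi(\mathrm{Homeo}_{\mathbb{Z}}(\mathbb{R}))$ and carries no evident $\mathrm{Homeo}_{0}(\mathbb{S}^{1})$-quotient to which Matsumoto's theorem would apply.

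The paper's missing ingredient is its Claim 1: $\mathrm{Fix}(\psi(t))=\mathrm{Fix}(\psi(\mathrm{Homeo}_{\mathbb{Z}}(\mathbb{R})))$. The device is to collapse each connected component of $\mathrm{Supp}(\psi(t))$ to a point; since every element of the image commutes with $\psi(t)$ and $\psi(t)$ itself acts trivially on the resulting quotient, one obtains an action of $\mathrm{Homeo}_{\mathbb{Z}}(\mathbb{R})/\langle t\rangle\cong\mathrm{Homeo}_{0}(\mathbb{S}^{1})$ on a quotient space homeomorphic to $\mathbb{R}$ (when $\mathrm{Fix}(\psi(t))$ has nonempty interior), and this action is trivial because $\mathrm{Homeo}_{0}(\mathbb{S}^{1})$ is simple and has torsion while $\mathrm{Homeo}(\mathbb{R})$ is torsion-free --- the same observation as in the Remark after Theorem \ref{roncirc}. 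This forces every connected component of $\mathrm{Supp}(\psi(t))$ to be invariant, and after restricting to one of them the remaining fixed points of $\psi(t)$ are handled by an integer-valued morphism $f\mapsto i(f)$ which is trivial by Lemma \ref{perfect}, so that they are global fixed points. You should supply an argument of this kind (a global quotient of the line by the gaps of $\mathrm{Fix}(T)$, rather than a gap-by-gap analysis) to close your second case; as written, the proposal proves the lemma only under the additional hypothesis that $\psi(T_{1})$ acts freely on each component of the complement of $F$.
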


\begin{lemma} \label{perfect}
Any group morphism $\mathrm{Homeo}_{\mathbb{Z}}(\mathbb{R}) \rightarrow \mathbb{Z}$ is trivial.
\end{lemma}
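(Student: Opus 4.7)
The plan is to combine a divisibility argument on the central translation with the simplicity of $\mathrm{Homeo}_{0}(\mathbb{S}^{1})$, which is already invoked elsewhere in the paper. Let $\varphi : \mathrm{Homeo}_{\mathbb{Z}}(\mathbb{R}) \rightarrow \mathbb{Z}$ be a group morphism and let $T$ denote the translation $x \mapsto x+1$.

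First, I would prove that $\varphi(T)=0$ by a divisibility argument. For every positive integer $n$, the translation $\tau_{n}(x)=x+1/n$ clearly belongs to $\mathrm{Homeo}_{\mathbb{Z}}(\mathbb{R})$ and satisfies $\tau_{n}^{n}=T$. Hence $\varphi(T)=n\, \varphi(\tau_{n})$ is divisible by $n$ for every $n \geq 1$, which forces $\varphi(T)=0$.

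Second, I would factor $\varphi$ through the circle. The subgroup $\langle T \rangle$ is central in $\mathrm{Homeo}_{\mathbb{Z}}(\mathbb{R})$ and is precisely the kernel of the surjective group morphism
$$ \pi : \mathrm{Homeo}_{\mathbb{Z}}(\mathbb{R}) \rightarrow \mathrm{Homeo}_{0}(\mathbb{S}^{1})$$
which sends each $f \in \mathrm{Homeo}_{\mathbb{Z}}(\mathbb{R})$ to the homeomorphism it induces on $\mathbb{R}/\mathbb{Z}=\mathbb{S}^{1}$ (this is the standard description of $\mathrm{Homeo}_{\mathbb{Z}}(\mathbb{R})$ as the group of lifts of orientation-preserving homeomorphisms of the circle). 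Since $\varphi(T)=0$, the morphism $\varphi$ descends to a group morphism $\overline{\varphi} : \mathrm{Homeo}_{0}(\mathbb{S}^{1}) \rightarrow \mathbb{Z}$.

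Finally, the group $\mathrm{Homeo}_{0}(\mathbb{S}^{1})$ is simple (as recalled in the introduction), so the normal subgroup $\mathrm{ker}(\overline{\varphi})$ is either trivial or the whole group. If it were trivial, $\overline{\varphi}$ would be an injection of $\mathrm{Homeo}_{0}(\mathbb{S}^{1})$ into $\mathbb{Z}$; but $\mathrm{Homeo}_{0}(\mathbb{S}^{1})$ contains torsion elements (any finite-order rotation) while $\mathbb{Z}$ is torsion-free, a contradiction. Hence $\overline{\varphi}$ is trivial, and so is $\varphi$. Given how short this argument is, I do not expect any real obstacle: the two ingredients (existence of arbitrary roots of $T$ in $\mathrm{Homeo}_{\mathbb{Z}}(\mathbb{R})$, and simplicity of $\mathrm{Homeo}_{0}(\mathbb{S}^{1})$) are both immediate or already cited in the paper.
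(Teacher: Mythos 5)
Your proof is correct, but it takes a genuinely different route from the paper. The paper disposes of the lemma in one line by quoting the stronger fact that $\mathrm{Homeo}_{\mathbb{Z}}(\mathbb{R})$ is perfect: every element is an explicit product of commutators (Section 2 of the Eisenbud--Hirsch--Neumann reference), so every morphism to any abelian group vanishes. You instead split the group along its central extension structure $1 \rightarrow \langle T \rangle \rightarrow \mathrm{Homeo}_{\mathbb{Z}}(\mathbb{R}) \rightarrow \mathrm{Homeo}_{0}(\mathbb{S}^{1}) \rightarrow 1$, kill the center by the divisibility of $T$ (the $n$-th roots $\tau_{n}$ do lie in $\mathrm{Homeo}_{\mathbb{Z}}(\mathbb{R})$, and $\varphi(T)$ divisible by all $n$ forces $\varphi(T)=0$), and kill the quotient by simplicity of $\mathrm{Homeo}_{0}(\mathbb{S}^{1})$ plus the torsion obstruction --- the same argument the paper itself uses in the remark after Theorem 2. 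Each step checks out: $\ker \pi = \langle T \rangle$ because a lift of the identity displaces every point by the same integer, and the lemma as stated only concerns the target $\mathbb{Z}$, which is all that is ever needed in the paper (the morphisms $i$, $n$, $k$ in the proofs of Lemmas 4.1 and 5.1 all take values in $\mathbb{Z}$). What the paper's argument buys is the stronger conclusion that the abelianization is trivial, hence triviality of morphisms into arbitrary abelian targets, at the cost of an external citation; what yours buys is self-containedness modulo ingredients already present in the paper, at the cost of an argument specific to targets (like $\mathbb{Z}$) whose only infinitely divisible element is $0$. A cosmetic remark: in the last step you could bypass the torsion argument entirely, since an injection of the simple nonabelian group $\mathrm{Homeo}_{0}(\mathbb{S}^{1})$ into the abelian group $\mathbb{Z}$ is already impossible.
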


\begin{proof}[Proof of Lemma \ref{perfect}]
Actually, any element in $\mathrm{Homeo}_{\mathbb{Z}}(\mathbb{R})$ can be written as a product of commutators, \emph{i.e.} elements of the form $aba^{-1}b^{-1}$, with $a,\ b \in \mathrm{Homeo}_{\mathbb{Z}}(\mathbb{R})$. For an explicit construction of such a decomposition, see Section 2 in \cite{EHN}.
\end{proof}

Observe that the center of the group $\mathrm{Homeo}_{\mathbb{Z}}(\mathbb{R})$ is the subgroup generated by the translation $x \mapsto x+1$. Let $\alpha$ (respectively $\alpha'$) be a generator of the center of $G$ (respectively of $G'$). Let $A_{\alpha}= \mathbb{R}- \mathrm{Fix}(\alpha)$ and $A_{\alpha'}=\mathbb{R}- \mathrm{Fix}(\alpha')$.

As the homeomorphisms $\alpha$ and $\alpha'$ commute:

$$\left\{
\begin{array}{l}
\alpha'(A_{\alpha})= A_{\alpha} \\
\alpha(A_{\alpha'})= A_{\alpha'}
\end{array}
\right.
.$$

Take any connected component $I$ of $A_{\alpha}$ and any connected component $I'$ of $A_{\alpha'}$. Then, either $I$ is contained in $I'$, or $I'$ is contained in $I$, or $I$ and $I'$ are disjoint. 

We now prove that only the latter case can occur. Suppose by contradiction that the interval $I$ is strictly contained in the interval $I'$. Let $\sim$ be the equivalence relation defined on $I'$ by
$$ x \sim y \Leftrightarrow (\exists k \in \mathbb{Z}, \ x= \alpha'^{k}(y)).$$
The topological space $I' / \sim$ is homeomorphic to a circle. By Lemma \ref{homeozsurr}, the group $G'$ preserves the interval $I'$. Notice that the group $G'/ < \alpha' > \approx \mathrm{Homeo}_{0}(\mathbb{S}^{1})$ acts on the circle $I' / \sim$. As the group $G'$ commutes with the homeomorphism $\alpha$, this action preserves the nonempty set $(A_{\alpha} \cap I')/ \sim $. As $\alpha'(A_{\alpha})= A_{\alpha}$, the endpoints of the interval $I$ are sent to points in the complement of $A_{\alpha}$ under the iterates of the homeomorphism $\alpha'$. Hence the set $(A_{\alpha} \cap I')/ \sim $ is not equal to the whole circle $I' / \sim$. However, by Theorem 5.3 in \cite{Mats} (see the remark below Theorem \ref{roncirc}), any non-trivial action of the group $\mathrm{Homeo}_{0}(\mathbb{S}^{1})$ on a circle is transitive. Hence, the group $G'/ < \alpha' >$ acts trivially on the circle $I' / \sim$: for any element $\beta'$ of $G'$, and any point $x \in I'$, there exists an integer $k(x, \beta') \in \mathbb{Z}$ such that $\beta'(x)= \alpha'^{k(x,\beta')}(x)$. Fixing such a point $x$, we see that the map
$$\begin{array}{rcl}
G' & \rightarrow & \mathbb{Z} \\
\beta' & \mapsto & k(x,\beta')
\end{array}$$
is a group morphism. Such a group morphism is trivial by Lemma \ref{perfect}. Therefore, the group $G'$ acts trivially on the interval $I'$, a contradiction.

Of course, the case where the interval $I'$ is strictly contained in $I$ is symmetric and cannot occur.

Suppose now that $I=I'$. Take any element $\beta'$ in $G'$. As the homeomorphism $\beta'$ commutes with $\alpha$, by Lemma \ref{homeozsurr}, the homeomorphism $\beta'$ is equal to some element of $G$ on $I$. As the homeomorphism $\beta'$ commute with any element of $G$, there exists a unique integer $k(\beta')$ such that $\beta'_{|I}=\alpha^{k(\beta')}_{|I}.$ The map $k: G \rightarrow \mathbb{Z}$ is a nontrivial group morphism. But such a map cannot exist by Lemma \ref{perfect}. Lemma \ref{commhomeoz} is proved.

It remains to prove Lemma \ref{homeozsurr}.

\begin{proof}[Proof of Lemma \ref{homeozsurr}]
Denote by $t$ a generator of the center of the group $\mathrm{Homeo}_{\mathbb{Z}}(\mathbb{R})$.  

\textbf{Claim 1.} The connected components of the complement of $\mathrm{Fix}(\psi(t))$ are each preserved by the group $\psi(\mathrm{Homeo}_{\mathbb{Z}}(\mathbb{R}))$. Moreover
$$\mathrm{Fix}(\psi(\mathrm{Homeo}_{\mathbb{Z}}(\mathbb{R})))= \mathrm{Fix}(\psi(t)).$$

\textbf{Claim 2.} Any action of the group $\mathrm{Homeo}_{\mathbb{Z}}(\mathbb{R})$ on $\mathbb{R}$ without fixed points is conjugate to the standard action.

It is clear that these two claims imply Lemma \ref{homeozsurr}.

First, let us prove Claim 1. The set $\mathrm{Fix}(\psi(t))$ is preserved under any element in $\psi(\mathrm{Homeo}_{\mathbb{Z}}(\mathbb{R}))$, because any element of this group commutes with the homeomorphism $\psi(t)$. Moreover, any element in $\psi(\mathrm{Homeo}_{\mathbb{Z}}(\mathbb{R}))$ preserves the orientation. Hence any connected component of the complement of $\mathrm{Fix}(\psi(t))$ with infinite length is preserved under the action of the group $\psi(\mathrm{Homeo}_{\mathbb{Z}}(\mathbb{R}))$. We suppose now that any connected component of the complement of $\mathrm{Fix}(\psi(t))$ has a finite length. Let us denote by $\equiv$ the equivalence relation on $\mathbb{R}$ such that $x \equiv y$ if and only if the points $x$ and $y$ belong to the same connected component of $\mathrm{Supp}(\psi(t))$. The morphism $\psi$ induces an action of the group $\mathrm{Homeo}_{\mathbb{Z}}(\mathbb{R}) / <t> \approx \mathrm{Homeo}_{0}(\mathbb{S}^{1})$ on the quotient topological space $\mathbb{R}/ \equiv$ which is homeomorphic to $\mathbb{R}$ if the set $\mathrm{Fix}(\psi(t))$ has a nonempty interior. However, such an action is trivial by the remark below Theorem \ref{roncirc}. Hence, any connected component of $\mathrm{Supp}(\psi(t))$ is preserved by the action. Restricting to one of these connected components if necessary, we can suppose that the closed set $\mathrm{Fix}(\psi(t))$ contains only isolated points. If this set is empty, there is nothing to prove. Otherwise, let
$$\mathrm{Fix}(\psi(t))= \left\{ x_{i}, i \in A \right\},$$
where $A$ is a set contained in $\mathbb{Z}$ and contains $0$ and where the sequence $(x_{i})_{i \in A}$ is strictly increasing. Then, for any element $f$ in $\mathrm{Homeo}_{\mathbb{Z}}(\mathbb{R})$, there exists an integer $i(f)$ such that $\psi(f)(x_{0})=x_{i(f)}$. The map $i: \mathrm{Homeo}_{\mathbb{Z}}(\mathbb{R}) \rightarrow \mathbb{Z}$ is a group morphism: it is trivial by Lemma \ref{perfect}. Claim 1 is proved.

By Claim 1, restricting if necessary the action on a connected component of the complement of $\mathrm{Fix}(\psi(t))$, we can suppose that the homeomorphism $\psi(t)$ has no fixed point. Changing coordinates if necessary, we can suppose that the homeomorphism $\psi(t)$ is the translation $x \mapsto x+1$. The morphism $\psi$ induces an action $\hat{\psi}$ of the group $\mathrm{Homeo}_{\mathbb{Z}}(\mathbb{R})/ <t> \approx \mathrm{Homeo}_{0}(\mathbb{S}^{1})$ on the circle $\mathbb{R}/ \mathbb{Z}$. This action is nontrivial: otherwise, there would exist a nontrivial group morphism $\mathrm{Homeo}_{0}(\mathbb{S}^{1}) \rightarrow \mathbb{Z}$. By the remark below Theorem \ref{roncirc}, there exists a homeomorphism $h$ of the circle $\mathbb{R}/\mathbb{Z}$ such that, for any homeomorphism $f$ in $\mathrm{Homeo}_{\mathbb{Z}}(\mathbb{R})/<t>$ (which can be canonically identified with $\mathrm{Homeo}_{0}(\mathbb{R}/\mathbb{Z})$):
$$ \hat{\psi}(f)=hfh^{-1}.$$
Take a lift $\tilde{h}: \mathbb{R} \rightarrow \mathbb{R}$ of $h$. For any integer $n$, denote by $T_{n}: \mathbb{R} \rightarrow \mathbb{R}$ the translation $x \mapsto x+n$. For any homeomorphism $f$ in $\mathrm{Homeo}_{\mathbb{Z}}(\mathbb{R})$, there exists an integer $n(f)$ such that
$$ \psi(f)=T_{n(f)}\tilde{h}f\tilde{h}^{-1}.$$
However, the map $n: \mathrm{Homeo}_{\mathbb{Z}}(\mathbb{R}) \rightarrow \mathbb{Z}$ is a group morphism: it is trivial by Lemma \ref{perfect}. This completes the proof of Claim 2.
\end{proof}

\end{document}